\newtheorem{theorem}{Theorem}[section]
\newtheorem{corollary}[theorem]{Corollary}
\theoremstyle{definition}
\newtheorem{definition}[theorem]{Definition}
\newtheorem{remark}[theorem]{Remark}
\newtheorem*{problem}{Problem $\mathbf{(H_\tau^n)}$}
\numberwithin{equation}{section}
\title[Variational problems of Herglotz type with time delay]{Higher-order
variational problems\\ of Herglotz type with time delay}
\author[S. P. S. Santos, N. Martins and D. F. M. Torres]{Sim\~{a}o P. S. Santos,
Nat\'{a}lia Martins and Delfim F. M. Torres}
\address[Sim\~{a}o P. S. Santos, Nat\'{a}lia Martins and Delfim F. M. Torres]{Center
for Research and Development in Mathematics and Applications (CIDMA),
Department of Mathematics, University of Aveiro, 3810-193 Aveiro, Portugal}
\email{{\tt spsantos@ua.pt, natalia@ua.pt, delfim@ua.pt}}
\keywords{Herglotz's problems; higher-order variational problems;
retarded control problems; Euler--Lagrange equations; invariance;
DuBois--Reymond condition; Noether's theorem.}
\subjclass[2010]{Primary: 49K15, 49S05; Secondary: 49K05, 34H05.}
\begin{document}

\begin{abstract}
We study, using an optimal control point of view, higher-order variational
problems of Herglotz type with time delay. Main results are higher-order
Euler--Lagrange and DuBois--Reymond necessary optimality conditions as well
as a higher-order Noether type theorem for delayed variational
problems of Herglotz type.
\end{abstract}

\maketitle


\section{Introduction}

The generalized variational problem proposed by Herglotz in 1930 \cite{Herglotz1930}
can be formulated as follows: determine the trajectories $x\in C^1([a,b];\mathbb{R}^m)$
and the function $z\in C^1([a,b];\mathbb{R})$ such that
\begin{equation}
\label{PH}
\tag{$H^1$}
\begin{gathered}
z(b)\longrightarrow \textrm{extr},\\
\text{where the pair } (x(\cdot),z(\cdot)) \text{ satisfies }\\
\dot{z}(t)=L(t,x(t),\dot{x}(t),z(t)), \quad t \in [a,b],\\
\text{subject to } x(a)=\alpha, \quad z(a)=\gamma,
\end{gathered}
\end{equation}
for some $\alpha \in \mathbb{R}^m$ and $\gamma \in \mathbb{R}$, and
where the Lagrangian $L$ is assumed to satisfy the following assumptions:
\begin{itemize}
\item[i.] $L \in C^1([a,b]\times\mathbb{R}^{2m+1}; \mathbb{R})$;

\item[ii.] the functions $\displaystyle t \mapsto \frac{\partial L}{\partial x}\left(t,
x(t), \dot{x}(t), z(t)\right)$, $\displaystyle t
\mapsto \frac{\partial L}{\partial \dot{x}}\left(t,
x(t), \dot{x}(t), z(t)\right)$  and $\displaystyle t \mapsto
\frac{\partial L}{\partial z}\left(t, x(t), \dot{x}(t), z(t)\right)$
are differentiable for any admissible pair $(x(\cdot),z(\cdot))$.
\end{itemize}
Observe that if the Lagrangian $L$ does not depend on the variable $z$, 
then we get the classical problem of the calculus of variations.
An advantage of formulation \eqref{PH} is the ability to provide a variational description
of non-conservative processes, even when the Lagrangian $L$ is autonomous, something
that is not possible with the classical variational problem. For important
applications in thermodynamics see \cite{Georgieva2002}.

The variational problem of Herglotz attracted the interest of the mathematical
community only in 1996, with the publications \cite{Guenther1996SIAM,Guenther1996}.
Since then, several authors investigated such variational problems.
The following generalizations  of well-known classical
results are available: extension of both Noether symmetry
theorems for first-order problems \cite{Georgieva2002,Georgieva2005,Georgieva2003};
Euler--Lagrange and transversality optimality conditions for higher-order
variational problems of Herglotz type \cite{MyArt01}; the first Noether theorem
for first-order problems of Herglotz with time delay  \cite{MyArt02};
and the first Noether theorem for higher-order problems of Herglotz type \cite{MyArt04}.
For an optimal control approach to first-order Herglotz type problems see \cite{MyArt03}.

Dynamic systems with time delay are very important in modelling real-life phenomena 
in several fields, such as mathematics, biology, chemistry, economics and mechanics. 
Indeed, several process outcomes are determined not only by variables at present time, 
but also by its behaviour in the past. Motivated by the importance of problems with 
time delay, many authors generalized the classical results of the calculus of variations 
to the delayed case. The first work in this direction seems to have been published
by \`El'sgol'c \cite{MR0170048}. For some recent works on optimal control
see \cite{MR3259239,MR3244467} and references therein. The importance 
of variational problems of Herglotz, as well as the wide applicability 
of problems with time delay, allied to the impossibility of applying 
the classical Noether theorem to these problems, constitute the main 
motivation to the present work.

The main goal of this paper is to generalize the results
of \cite{MyArt01,MyArt02,MyArt03,MyArt04} by
considering higher-order variational problems of Herglotz type with a time delay,
proving the corresponding Euler--Lagrange equations, transversality conditions,
the DuBois--Reymond necessary optimality condition and Noether's first theorem.
In particular, in relation to our previous work with time delay \cite{MyArt02}, 
we improved its results by considering a wider class of admissible functions. 
Moreover, we extend the results of \cite{MyArt02} to the higher-order case.
Precisely, we generalize Herglotz's problem \eqref{PH} by considering
the following variational problem with time delay.
\begin{problem}
\textit{Let $\tau$ be a real number such that $0\leq \tau<b-a$.
Determine the piecewise trajectories $x\in PC^n([a-\tau,b];\mathbb{R}^m)$
and the function $z\in PC^1([a,b];\mathbb{R})$ such that:}
\begin{equation*}
z(b)\longrightarrow \textrm{extr},\\
\end{equation*}
\textit{where the pair} $(x(\cdot),z(\cdot))$
\textit{satisfies the differential equation}
\begin{equation*}
\dot{z}(t)=L\left(t,x(t),\dot{x}(t),\dots, x^{(n)}(t),
x(t-\tau),\dot{x}(t-\tau),\dots, x^{(n)}(t-\tau),z(t)\right),
\end{equation*}
\textit{for} $t \in [a,b]$, \textit{and is subject to initial conditions}
\begin{equation*}
z(a)=\gamma \in \mathbb{R} \text{ and }
x^{(k)}(t)=\mu^{(k)}(t), \quad k=0,\dots,n-1,
\end{equation*}
\textit{where} $\mu \in PC^n ([a-\tau,a];\mathbb{R}^m)$
\textit{ is a given initial function. The Lagrangian
$L$ is assumed to satisfy the following hypotheses:}
\begin{itemize}
\item[i.] $L \in C^1([a,b]\times\mathbb{R}^{2mn+1}; \mathbb{R})$;

\item[ii.] \textit{functions} $t \mapsto
\frac{\partial L}{\partial z}[x;z]_\tau^n(t)$,
$t\mapsto \frac{\partial L}{\partial x^{(k)}}[x;z]_\tau^n(t)$
\textit{and} $t\mapsto \frac{\partial L}{\partial x_\tau^{(k)}}[x;z]_\tau^n(t)$
\textit{are differentiable for any admissible pair}
$(x(\cdot),z(\cdot))$, $k=0,\dots,n$,
\end{itemize}
\textit{where, to simplify expressions, we use the notation
$x_\tau^{(k)}(t)$, $k=0,\dots, n$, to denote the $k$th derivative
of $x$ evaluated at $t-\tau$ (often we use
$x_\tau(t)$ for $x_\tau^{(0)}(t) = x(t-\tau)$ and
$\dot{x}_\tau(t)$ for $x_\tau^{(1)}(t) = \dot{x}(t-\tau)$) and
$$
[x;z]^n_\tau(t):=\left(t,x(t),\dot{x}(t),
\dots, x^{(n)}(t), x_\tau(t),\dot{x}_\tau(t),\dots, x_\tau^{(n)}(t),z(t)\right).
$$
}
\end{problem}

The structure of the paper is as follows. In Section~\ref{sec:prelim}
we recall the necessary background: the well-known Pontryagin's maximum principle,
the DuBois--Reymond necessary optimality condition, and an extension of the
classical Noether's theorem for optimal control problems. In
Section~\ref{sec:MainRes} we formulate and prove our main results:
the higher-order Euler--Lagrange equations and transversality conditions
for generalized variational problems with time delay (Theorem \ref{thm:E-L});
the DuBois--Reymond optimality condition (Theorem~\ref{thm:DB-R});
and Noether's theorem for higher-order variational problems
of Herglotz type with time delay (Theorem~\ref{thm:Noether}).
We end with Section~\ref{sec:conc} of conclusions and possible
future work.


\section{Preliminaries}
\label{sec:prelim}

We begin by recalling the problem of optimal control in Bolza form:
\begin{equation}
\label{problem P}
\tag{$P$}
\begin{gathered}
\mathcal{J}(x(\cdot),u(\cdot))=\int_a^b f(t,x(t),u(t))dt
+\phi(x(b))\longrightarrow\textrm{extr}\\
\text{subject to } \dot{x}(t)=g(t,x(t),u(t)),
\end{gathered}
\end{equation}
with some initial condition on $x$,
where $f \in C^1([a,b]\times \mathbb{R}^{m}\times \Omega;\mathbb{R})$,
$\phi \in C^1(\mathbb{R}^{m};\mathbb{R})$,
$g \in C^1([a,b]\times \mathbb{R}^{m}\times \Omega;\mathbb{R}^m)$,
$x \in PC^1([a,b]; \mathbb{R}^m)$ and $u\in PC([a,b];\Omega)$,
with $\Omega \subseteq \mathbb{R}^r$ an open set.
Usually $x$ and $u$ are called the state
and control variables, respectively, while $\phi$ is known
as the payoff or salvage term. It is clear that the classical
problem of the calculus of variations
is a particular case of problem \eqref{problem P}
with $\phi(x) \equiv 0$, $g(t,x,u)=u$ and $\Omega=\mathbb{R}^m$.
Next we present Pontryagin's maximum principle,
one of the main tools for this paper.

\begin{theorem}[Pontryagin's maximum principle
for problem \eqref{problem P} \cite{Pontryagin}]
\label{PMP}
If a pair $(x(\cdot),u(\cdot))$ with
$x \in PC^1([a,b]; \mathbb{R}^m)$ and $u\in PC([a,b];\Omega)$
is a solution to problem \eqref{problem P} with the initial condition $x(a)=\alpha$,
$\alpha \in \mathbb{R}^m$, then there exists $\psi \in PC^1([a,b];\mathbb{R}^m)$
such that the following conditions hold:
\begin{itemize}
\item the optimality condition
\begin{equation}
\label{prob P opt condt}
\frac{\partial H}{\partial u}(t, x(t),u(t), \psi(t))=0;
\end{equation}
		
\item the adjoint system
\begin{equation}
\label{prob P adj syst}
\begin{cases}
\dot{x}(t)=\frac{\partial H}{\partial \psi}(t, x(t),u(t), \psi(t))\\
\dot{\psi}(t)=-\frac{\partial H}{\partial x}(t, x(t),u(t), \psi(t));
\end{cases}
\end{equation}
		
\item the transversality condition
\begin{equation}
\label{prob P tr condt}
\psi(b)=grad(\phi(x))(b);
\end{equation}
\end{itemize}
where the Hamiltonian $H$ is defined by
\begin{equation}
\label{eq:def:Hamiltonian}
H(t,x,u,\psi)=f(t,x,u)+\psi\cdot g(t,x,u).
\end{equation}
\end{theorem}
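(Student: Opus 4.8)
The plan is to exploit the fact that the control set $\Omega$ is \emph{open}, so that admissible controls may be perturbed freely in every direction; this permits a weak-variation (Lagrange multiplier) argument in place of the needle variations required when the control set is constrained, and it is precisely openness that turns the maximum condition into the stationarity condition \eqref{prob P opt condt}. First I would adjoin the dynamics to the cost through the multiplier $\psi$, phrasing everything by means of the Hamiltonian $H$ of \eqref{eq:def:Hamiltonian}, and observe that the first line of the adjoint system \eqref{prob P adj syst}, namely $\dot{x}=\frac{\partial H}{\partial \psi}$, is merely the control equation $\dot{x}=g(t,x,u)$ restated, hence automatically satisfied by any admissible pair.

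Next, let $(x(\cdot),u(\cdot))$ be optimal and fix an arbitrary $h\in PC([a,b];\mathbb{R}^r)$. For $|\epsilon|$ small the perturbed control $u^\epsilon=u+\epsilon h$ still takes values in $\Omega$, and I would let $x^\epsilon$ denote the corresponding trajectory with $x^\epsilon(a)=\alpha$. Writing $y=\left.\frac{\partial}{\partial\epsilon}\right|_{\epsilon=0}x^\epsilon$ for the state variation, differentiation of $\dot{x}^\epsilon=g(t,x^\epsilon,u^\epsilon)$ yields the linearised equation $\dot{y}=\frac{\partial g}{\partial x}\,y+\frac{\partial g}{\partial u}\,h$ with $y(a)=0$. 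Differentiating the cost at $\epsilon=0$ then gives $\delta\mathcal{J}=\int_a^b\left(\frac{\partial f}{\partial x}\,y+\frac{\partial f}{\partial u}\,h\right)dt+grad(\phi(x))(b)\cdot y(b)$.

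The central step is to eliminate $y$ by a suitable choice of $\psi$. Taking $\psi$ to be the solution of the \emph{terminal} value problem $\dot{\psi}=-\frac{\partial H}{\partial x}=-\left(\frac{\partial f}{\partial x}+\psi\cdot\frac{\partial g}{\partial x}\right)$, $\psi(b)=grad(\phi(x))(b)$ (uniquely solvable, being linear in $\psi$), I would integrate $\frac{d}{dt}(\psi\cdot y)$ by parts. Using $y(a)=0$ and the linearised equation, the terms in $\frac{\partial f}{\partial x}\,y$ cancel, while the boundary contribution collapses to $\left(grad(\phi(x))(b)-\psi(b)\right)\cdot y(b)$, which vanishes by the transversality choice \eqref{prob P tr condt}. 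What remains is $\delta\mathcal{J}=\int_a^b\left(\frac{\partial f}{\partial u}+\psi\cdot\frac{\partial g}{\partial u}\right)h\,dt=\int_a^b\frac{\partial H}{\partial u}\,h\,dt$. Since $(x,u)$ is optimal, $\delta\mathcal{J}=0$ for every admissible $h$, and the fundamental lemma of the calculus of variations forces \eqref{prob P opt condt}. The main technical obstacle is the rigorous justification of the variation $y$ --- its existence, the interchange of $\frac{\partial}{\partial\epsilon}$ with the integral, and the treatment of the finitely many points where $x\in PC^1$ and $u\in PC$ are only piecewise smooth --- but away from those points the computation is the classical one, and the jumps contribute nothing to the first variation.
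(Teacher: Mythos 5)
This theorem is not proved in the paper at all: it is quoted as known background, with attribution to the book of Pontryagin et al.\ \cite{Pontryagin}, so there is no internal proof to compare yours against. Judged on its own, your sketch is the correct classical argument for exactly the form stated here: because $\Omega$ is open and the conclusion \eqref{prob P opt condt} is only the stationarity condition $\frac{\partial H}{\partial u}=0$ (not the pointwise maximization of $H$ over $\Omega$), weak variations $u^\epsilon=u+\epsilon h$ suffice, and your scheme --- define $\psi$ by the terminal value problem $\dot{\psi}=-\frac{\partial H}{\partial x}$, $\psi(b)=grad(\phi(x))(b)$, then integrate $\frac{d}{dt}(\psi\cdot y)$ to eliminate the state variation $y$ from $\delta\mathcal{J}$ and conclude $\delta\mathcal{J}=\int_a^b\frac{\partial H}{\partial u}\cdot h\,dt$ --- is the standard route. (To be precise, what cancels under the integral sign are the terms $\psi\cdot\frac{\partial g}{\partial x}\,y$, while $\int_a^b\frac{\partial f}{\partial x}\cdot y\,dt$ is traded for the boundary term; the net effect you describe is right.) Two technicalities deserve explicit mention beyond your closing caveat: (i) to keep $u+\epsilon h$ in $\Omega$ for all $t$ simultaneously you need the closure of the range of $u$, including its one-sided limits at the finitely many corners, to be a compact subset of $\Omega$, giving a positive distance to the complement; (ii) since the problem asks for an extremum rather than a minimum, you still get $\delta\mathcal{J}=0$ because $\epsilon\mapsto\mathcal{J}(x^\epsilon,u^\epsilon)$ is differentiable with a local extremum at $\epsilon=0$, and the fundamental lemma can be applied with the admissible choice $h=\left(\frac{\partial H}{\partial u}\right)^{T}$, which is piecewise continuous, yielding \eqref{prob P opt condt} at all points of continuity. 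What the paper's citation buys instead is the full strength of Pontryagin's result (needle variations, constrained or closed control sets, the genuine maximum condition), none of which is needed for the weak statement actually used in this paper; so your self-contained argument is an adequate substitute for the result as stated.
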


\begin{definition}[Pontryagin extremal to \eqref{problem P}]
A triplet $(x(\cdot),u(\cdot), \psi(\cdot))$ with $x \in PC^1([a,b]$;
$\mathbb{R}^m)$, $u\in PC([a,b];\Omega)$ and
$\psi \in PC^1([a,b];\mathbb{R}^m)$ is called a Pontryagin extremal
to problem \eqref{problem P} if it satisfies the necessary
optimality conditions \eqref{prob P opt condt}--\eqref{prob P tr condt}.
\end{definition}

Now we present the following necessary optimality condition
that is used in the proof of our Theorem~\ref{thm:DB-R}.

\begin{theorem}[DuBois--Reymond condition of optimal control \cite{Pontryagin}]
\label{thm DB-R OC}
If $(x(\cdot),u(\cdot), \psi(\cdot))$ is a Pontryagin extremal
to problem \eqref{problem P}, then the Hamiltonian \eqref{eq:def:Hamiltonian}
satisfies the equality
\begin{equation*}
\frac{d H}{dt}(t,x(t),u(t),\psi(t))
=\frac{\partial H}{\partial t}(t,x(t),u(t),\psi(t)),
\quad 	t \in [a,b].
\end{equation*}
\end{theorem}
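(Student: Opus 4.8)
The plan is to differentiate the scalar function $t \mapsto H(t,x(t),u(t),\psi(t))$ along the given Pontryagin extremal and to show that every contribution except the explicit time-dependence cancels. Applying the chain rule (at a point where the derivative exists), and writing every partial derivative as evaluated at $(t,x(t),u(t),\psi(t))$, I would start from
\begin{equation*}
\frac{dH}{dt}
= \frac{\partial H}{\partial t}
+ \frac{\partial H}{\partial x}\cdot \dot{x}
+ \frac{\partial H}{\partial u}\cdot \dot{u}
+ \frac{\partial H}{\partial \psi}\cdot \dot{\psi}.
\end{equation*}
The whole proof then reduces to showing that the last three terms sum to zero, using the necessary conditions supplied by the maximum principle (Theorem~\ref{PMP}).

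The cancellation is immediate once the adjoint system and the optimality condition are substituted. From \eqref{prob P adj syst} we have $\dot{x} = \partial H/\partial \psi$ and $\dot{\psi} = -\,\partial H/\partial x$, so the second and fourth terms combine to $\frac{\partial H}{\partial x}\cdot\frac{\partial H}{\partial \psi} - \frac{\partial H}{\partial \psi}\cdot\frac{\partial H}{\partial x}$, which vanishes by symmetry of the Euclidean inner product. The third term is killed directly by the optimality condition \eqref{prob P opt condt}, since $\partial H/\partial u = 0$ along the extremal. What remains is precisely $\partial H/\partial t$, the asserted identity; note that the transversality condition \eqref{prob P tr condt} is not needed here.

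The main obstacle is the low regularity of the control: $u$ belongs only to $PC([a,b];\Omega)$, so $\dot{u}$ need not exist and the naive chain-rule term $\frac{\partial H}{\partial u}\cdot\dot{u}$ is not, a priori, meaningful. I would handle this by avoiding differentiation of $u$ altogether and arguing with difference quotients on each open subinterval where $u$ is continuous. Fixing such $t$ and a small increment $h$, I would split $H(t+h,\dots)-H(t,\dots)$ into a piece in which only $(t,x,\psi)$ move while the control is frozen — which, by the $C^1$ regularity of $H$ together with the differentiability of $x,\psi$ and the adjoint equations, contributes $h\,\partial H/\partial t + o(h)$ — plus a piece in which only the control is swapped; the latter piece has a sign fixed by the maximality of $H$ in $u$, and a symmetric decomposition at the other endpoint supplies the opposite inequality. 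Squeezing these one-sided estimates yields $dH/dt = \partial H/\partial t$ on each subinterval, and the identity extends to all of $[a,b]$ using that $t\mapsto H(t,x(t),u(t),\psi(t))$ is continuous across the finitely many junction points of $u$. This continuity-at-the-corners step is the only delicate point; the interior computation is the routine substitution described above.
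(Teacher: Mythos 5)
First, context: the paper does not prove this theorem at all --- it is a preliminary result quoted from \cite{Pontryagin} --- so your argument can only be compared with the classical proof, which your refined difference-quotient version essentially reproduces (freeze the control, cancel the $x$- and $\psi$-contributions via the adjoint system, squeeze the control-swap term between one-sided estimates). Your first-paragraph chain-rule computation is also correct whenever $\dot{u}(t)$ exists, and your remark that transversality plays no role is accurate.

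The genuine problem is that the two steps you yourself flag as delicate both rely on the \emph{maximality} of $H$ in $u$, and that condition is not available from the statement you are proving. In this paper a Pontryagin extremal is, by definition, a triplet satisfying the stationarity condition \eqref{prob P opt condt}, the adjoint system \eqref{prob P adj syst} and the transversality condition \eqref{prob P tr condt}; since $\Omega$ is open, Theorem~\ref{PMP} is stated as a \emph{weak} maximum principle, and maximality of $H$ over $\Omega$ is nowhere asserted. Under stationarity alone your argument breaks down at both delicate points. Concretely, take $m=r=1$, $\Omega=\mathbb{R}$, $f(t,x,u)=-(u^2-1)^2$, $g(t,x,u)=u$, $\phi\equiv 0$. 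Then $\dot{\psi}=-\partial H/\partial x=0$ together with transversality forces $\psi\equiv 0$, and the stationarity condition $-4u(u^2-1)=0$ is satisfied by \emph{any} piecewise-constant control with values in $\{0,\pm 1\}$. Choosing $u$ to switch from $0$ to $1$ at an interior instant yields a Pontryagin extremal in the paper's sense along which $H=-(u^2-1)^2$ jumps from $-1$ to $0$: the function $t\mapsto H(t,x(t),u(t),\psi(t))$ is \emph{not} continuous at the corner, so your final gluing step (``the identity extends across junction points by continuity'') is false in this setting. Continuity of $H$ at corners is the Weierstrass--Erdmann condition, which is a consequence of maximality, not of stationarity. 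The same hypothesis gap affects your interior estimate: on an interval where $u$ is continuous but not Lipschitz, stationarity bounds the control-swap term only by $\omega\bigl(|u(t+h)-u(t)|\bigr)\,|u(t+h)-u(t)|$, with $\omega$ a modulus of continuity of $\partial H/\partial u$, and this need not be $o(h)$; it is exactly the sign information coming from maximality that rescues the squeeze, as in your own sketch. So to make the proof complete you should either (i) invoke the full maximum principle of \cite{Pontryagin}, i.e.\ add the condition $H(t,x(t),u(t),\psi(t))=\max_{v\in\Omega}H(t,x(t),v,\psi(t))$ to the hypotheses --- then both delicate steps go through exactly as you describe --- or (ii) assume $u$ is piecewise $C^1$ and read the conclusion on each interval of smoothness, in which case your first-paragraph computation already suffices and no squeeze is needed.
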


Many years before the publication of the celebrated result of Pontryagin
\textit{\textit{et al.}} in \cite{Pontryagin}, Emmy Noether proved two
remarkable theorems that relate the invariance of a variational integral
with the corresponding Euler--Lagrange equations. Several extensions
of the two Noether theorems were proved in different contexts. In this paper
we are concerned with the first Noether theorem, also known simply
by Noether's theorem. The Noether theorem \cite{Noether1918} is a fundamental tool
of the calculus of variations \cite{MR2098297},
optimal control \cite{Torres2001,Torres2002,Torres2004}
and modern theoretical physics \cite{MR3072684}.
This theorem guarantees that when an optimal control problem is invariant
under a one parameter family of transformations,
then there exists a corresponding conservation law:
an expression that is conserved along all the Pontryagin
extremals of the problem (see \cite{Torres2001,Torres2002,Torres2004}
and references therein). Here we use Noether's theorem as stated
in \cite{Torres2001}, which is formulated for problems of optimal control
in Lagrange form, that is, for problem \eqref{problem P} with
$\phi \equiv 0$. In order to apply the results of \cite{Torres2001}
to the Bolza problem \eqref{problem P}, we rewrite
it in the following equivalent Lagrange form:
\begin{equation}
\label{eq:prob:Lag}
\begin{gathered}
\mathcal{I}(x(\cdot),y(\cdot),u(\cdot))
=\int_a^b \left[f(t,x(t),u(t))+ y(t)\right] dt \longrightarrow\textrm{extr},\\
\begin{cases}
\dot{x}(t)=g\left(t,x(t),u(t)\right),\\
\dot{y}(t)=0,
\end{cases}\\
x(a)=\alpha, \ y(a)= \frac{\phi(x(b))}{b-a}.
\end{gathered}
\end{equation}
To present Noether's theorem for the optimal control problem
\eqref{problem P}, we need to define the concept of invariance.
In this paper we follow the definition of invariance found
in \cite{Torres2001} applied to the equivalent optimal control
problem \eqref{eq:prob:Lag}. In Definition~\ref{def inv OC}
we use the little-o notation.

\begin{definition}[Invariance of problem \eqref{problem P}]
\label{def inv OC}
Let $h^s$ be a one-parameter family of invertible $C^1$ maps
\begin{equation*}
\begin{gathered}
h^s:[a,b]\times \mathbb{R}^m\times \Omega
\longrightarrow\mathbb{R}\times\mathbb{R}^m\times \mathbb{R}^r,\\
h^s(t,x,u)=\left(\mathcal{T}^s(t,x,u),
\mathcal{X}^s(t,x,u),\mathcal{U}^s(t,x,u)\right),\\
h^0(t,x,u)=(t,x,u) \text{ for all }
(t,x,u)\in [a,b]\times \mathbb{R}^m\times \Omega.
\end{gathered}
\end{equation*}
Problem \eqref{problem P} is said to be invariant under transformations $h^s$
if for all $(x(\cdot),u(\cdot))$ the following two conditions hold:
\begin{multline*}
\left[f \circ h^s(t,x(t),u(t))+\frac{\phi(x(b))}{b-a} + \xi s
+ o(s)\right]\frac{d\mathcal{T}^s}{dt}(t,x(t),u(t))\\
= f(t,x(t),u(t)) + \frac{\phi(x(b))}{b-a}
\end{multline*}
for some constant $\xi$; and
\begin{equation*}
\frac{d\mathcal{X}^s}{dt}\left(t,x(t),u(t)\right)
=g\circ h^s(t,x(t),u(t))\frac{d\mathcal{T}^s}{dt}(t,x(t),u(t)).
\end{equation*}
\end{definition}

As a direct consequence of Noether's theorem proved
in \cite{Torres2001} and Pontryagin's maximum principle
(Theorem~\ref{PMP}), we get the following result that
is central to prove our Theorem~\ref{thm:Noether}.

\begin{theorem}[Noether's theorem for the optimal control problem \eqref{problem P}]
\label{thm opt cont Noether}
If problem \eqref{problem P} is invariant
in the sense of Definition~\ref{def inv OC}, then the quantity
\begin{multline*}
(b-t) \xi + \psi(t) \cdot X(t,x(t),u(t))\\
-\left[H(t,x(t),u(t),\psi(t)) + \frac{\phi(x(b))}{b-a}\right]\cdot T(t,x(t),u(t))
\end{multline*}
is constant in $t$ along every Pontryagin extremal $(x(\cdot),u(\cdot), \psi(\cdot))$
of problem \eqref{problem P}, where $H$ is defined by \eqref{eq:def:Hamiltonian} and
\begin{equation*}
\begin{gathered}
T(t,x(t),u(t))=\frac{\partial \mathcal{T}^s}{\partial s}(t,x(t),u(t))\biggm\vert_{s=0},\\
X(t,x(t),u(t))=\frac{\partial \mathcal{X}^s}{\partial s}(t,x(t),u(t))\biggm\vert_{s=0}.
\end{gathered}
\end{equation*}
\end{theorem}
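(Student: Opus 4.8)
The plan is to deduce the statement from the Noether theorem for Lagrange-form optimal control problems proved in \cite{Torres2001}, applied to the equivalent reformulation \eqref{eq:prob:Lag} of problem \eqref{problem P}. First I would view \eqref{eq:prob:Lag} as an optimal control problem in the extended state $\tilde{x}=(x,y)$, with augmented dynamics $\dot{x}=g(t,x,u)$, $\dot{y}=0$, running cost $\tilde{f}(t,\tilde{x},u)=f(t,x,u)+y$, and no salvage term. Because $\dot{y}=0$ and $y(a)=\frac{\phi(x(b))}{b-a}$, the component $y$ is constant, equal to $\frac{\phi(x(b))}{b-a}$ along every admissible trajectory; hence $\mathcal{I}(x,y,u)=\mathcal{J}(x,u)$ and the Pontryagin extremals of \eqref{problem P} are exactly those of \eqref{eq:prob:Lag}.

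Next I would lift the one-parameter family $h^s$ to the extended state by setting $\tilde{h}^s(t,x,y,u)=\left(\mathcal{T}^s,\mathcal{X}^s,\mathcal{Y}^s,\mathcal{U}^s\right)$ with $\mathcal{Y}^s=y+\xi s+o(s)$, where $\xi$ is the constant supplied by Definition~\ref{def inv OC}. The central step is to verify that invariance of \eqref{problem P} in the sense of Definition~\ref{def inv OC} coincides with invariance of \eqref{eq:prob:Lag} in the sense of \cite{Torres2001}: the first condition of Definition~\ref{def inv OC} becomes $\tilde{f}\circ\tilde{h}^s\cdot\frac{d\mathcal{T}^s}{dt}=\tilde{f}$ once $\frac{\phi(x(b))}{b-a}+\xi s+o(s)$ is absorbed into $\mathcal{Y}^s=y+\xi s+o(s)$, while the dynamics requirement splits into its $x$-component, which is the second condition of Definition~\ref{def inv OC}, and the trivial $y$-component $\frac{d\mathcal{Y}^s}{dt}=0$.

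Granting this, the Noether theorem of \cite{Torres2001} applied to \eqref{eq:prob:Lag} yields that $\tilde{\psi}\cdot\tilde{X}-\tilde{H}\tilde{T}$ is conserved along every extremal, where $\tilde{\psi}=(\psi,\psi_y)$ is the adjoint of the extended state, $\tilde{X}=(X,\xi)$ is the infinitesimal generator of $\tilde{h}^s$, and the extended Hamiltonian is $\tilde{H}=\tilde{f}+\tilde{\psi}\cdot(g,0)=H+y=H+\frac{\phi(x(b))}{b-a}$. It then remains only to identify $\psi_y$. Applying Pontryagin's maximum principle (Theorem~\ref{PMP}) to \eqref{eq:prob:Lag}, the adjoint equation for the $y$-component reads $\dot{\psi}_y=-\frac{\partial\tilde{H}}{\partial y}=-1$, and the transversality condition at the free terminal value $y(b)$, where there is no salvage term, gives $\psi_y(b)=0$; integrating produces $\psi_y(t)=b-t$. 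Substituting $\psi_y(t)=b-t$ and $\tilde{H}=H+\frac{\phi(x(b))}{b-a}$ into $\tilde{\psi}\cdot\tilde{X}-\tilde{H}\tilde{T}=\psi\cdot X+\psi_y\xi-\tilde{H}T$ reproduces exactly the asserted conserved quantity.

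I expect the main obstacle to be the bookkeeping of the second step: confirming that the constant $\xi$ and the little-$o$ terms of Definition~\ref{def inv OC} match the invariance requirement of \cite{Torres2001} precisely after the lift to $(x,y)$, and checking that the coupling $y(a)=\frac{\phi(x(b))}{b-a}$ between the initial value of $y$ and the terminal value of $x$ does not interfere with the derivation of $\psi_y(b)=0$. Once these are settled, the result is a direct translation between the Bolza and Lagrange formulations.
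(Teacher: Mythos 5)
Your proposal is correct and follows exactly the route the paper intends: the paper states Theorem~\ref{thm opt cont Noether} as a direct consequence of the Noether theorem of \cite{Torres2001} applied to the Lagrange reformulation \eqref{eq:prob:Lag} together with Pontryagin's maximum principle, which is precisely what you carry out, and your key identifications (the lift $\mathcal{Y}^s=y+\xi s+o(s)$, the extended Hamiltonian $\tilde{H}=H+\phi(x(b))/(b-a)$, and $\psi_y(t)=b-t$ from $\dot{\psi}_y=-1$ with $\psi_y(b)=0$) are all sound. In particular, your worry about the coupled boundary condition $y(a)=\phi(x(b))/(b-a)$ resolves favourably: $y(b)$ remains genuinely free, so $\psi_y(b)=0$, and the coupling in the transversality condition for $\psi(b)$ reproduces exactly the Bolza condition $\psi(b)=\mathrm{grad}(\phi(x))(b)$, confirming the correspondence of Pontryagin extremals.
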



\section{Main results}
\label{sec:MainRes}

We begin by introducing some definitions for the variational problem
of Herglotz with time delay $\mathbf{(H_\tau^n)}$.

\begin{definition}[Admissible pair to problem $\mathbf{(H_\tau^n)}$]
We say that $(x(\cdot),z(\cdot))$ with $x(\cdot) \in PC^n([a-\tau,b];\mathbb{R}^m)$
and $z(\cdot) \in PC^1([a,b];\mathbb{R})$ is an admissible pair to problem
$\mathbf{(H_\tau^n)}$ if it satisfies the equation
$$
\dot{z}(t)=L[x;z]^n_\tau(t), \quad t \in [a,b],
$$
subject to
$$
z(a)=\gamma, x^{(k)}(t)=\mu^{(k)}(t)
$$
for all $k=0,1,\dots,n-1$, $t\in [a-\tau,a]$
and $\gamma \in \mathbb{R}$.
\end{definition}

\begin{definition}[Extremizer to problem $\mathbf{(H_\tau^n)}$]
We say that an admissible pair $(x^*(\cdot),z^*(\cdot))$ is an extremizer
to problem $\mathbf{(H_\tau^n)}$ if $z(b)-z^*(b)$ has the same signal for all
admissible pairs $(x(\cdot),z(\cdot))$ that satisfy
$\|z-z^* \|_0< \epsilon$ and $\|x-x^* \|_0< \epsilon$
for some positive real $\epsilon$, where
$\|y\|_0=\smash{\displaystyle\max_{a\leq t \leq b}}|y(t)|$.
\end{definition}


\subsection{Reduction to a non-delayed problem}

We generalize the technique of reduction of a delayed first-order
optimal control problem to a non-delayed problem proposed
by Guinn in \cite{Guinn} to our higher-order delayed problem.
In order to reduce the higher-order problem of Herglotz with time delay
to a non-delayed first-order problem, we assume,
without loss of generality, the initial time to be zero ($a=0$)
and the final time to be an integer multiple of $\tau$, that is,
$b=N\tau$ for $N \in \mathbb{N}$ (see Remark~\ref{rem:red:bN}).
We divide the interval $[a,b]$ into $N$ equal parts and fix $t \in [0,\tau]$.
We also introduce the variables $x^{k;i}$ and $z_j$ with $k=0,\dots,n$,
$i=0,\dots,N$, and $j=1,\dots, N+1$. The variable $k$ is related to the order
of the derivative of $x$, $i$ is related to the $i$th subinterval of
$[-\tau,N\tau]$, and $j$ is related to the $j$th subinterval
of $[0,(N+1)\tau]$ as follows:
\begin{equation}
\label{reduction}
\begin{gathered}
x^{k;i}(t)=x^{(k)}(t+(i-1)\tau),
\quad z_j(t)=z(t+(j-1)\tau),\\
\dot{z}_j(t)=L_j(t),\quad
x^{k;N+1}(t)=0,
\quad \dot{z}_{N+1}(t)=L_{N+1}=0
\end{gathered}
\end{equation}
with
$$
L_j(t):=L\left(t+(j-1)\tau,x^{0;j}(t),\dots,x^{n;j}(t),
x^{0;j-1}(t),\dots,x^{n;j-1}(t),z_j(t)\right).
$$
Finally, the higher-order problem of Herglotz with time delay $\mathbf{(H_\tau^n)}$
can be written as an optimal control problem without time delay as follows:
\begin{equation}
\label{Problem OC form}
\begin{gathered}
z_N(\tau)\longrightarrow\textrm{extr},\quad \text{subject to }\\
\begin{cases}
\dot{x}^{k;i}(t)=x^{k+1;i}(t),  \\
x^{k;N+1}(t)=0,\\
\dot{z}_j(t)=L_j(t), \\
\dot{z}_{N+1}(t)=L_{N+1}(t)=0
\end{cases}\\
\text{ for all } t \in[0,\tau] \text{ and with the initial conditions}\\
x^{k;0}(0)=\mu^{(k)}(-\tau),\quad
x^{k;i}(0)=x^{k;i-1}(\tau),\\
z_1(0)=\gamma,\quad \gamma \in \mathbb{R},
\quad z_j(0)=z_{j-1}(\tau)
\end{gathered}	
\end{equation}
for $k=0, \dots, n-1$, $i=0,\dots, N$ and $j=1,\dots, N$.
In this form we look to $x^{k;i}$ and $z_j$ as state variables
and to $u_i:=x^{n;i}$ as the control variables.

\begin{remark}
\label{rem:red:bN}
We considered the case of $b$ being an integer multiple of $\tau$.
If $b$ is not an integer multiple of $\tau$, then there is an integer $N$
such that $(N-1)\tau<b<N\tau$. In that case, the only modification required
in the change of variables given in \eqref{reduction} is to consider the variables
$x^{k;N}$, $k=0, \dots, n$, and $\dot{z}_N$ as defined in \eqref{reduction}
for $t \in [0,b-(N-1)\tau]$ and zero for $t\in ]b-(N-1)\tau, \tau]$. With this
slight change, the function to be extremized remains the same and we can consider,
without loss of generality, $b$ to be an integer multiple of $\tau$.
\end{remark}


\subsection{Higher-order Euler--Lagrange and DuBois--Reymond
optimality conditions with time delay}

We begin by proving a necessary condition for a pair $(x(\cdot),z(\cdot))$
to be an extremizer to problem $\mathbf{(H_\tau^n)}$. Along the proofs
we sometimes suppress arguments for expressions whose arguments
have been clearly stated before.

\begin{theorem}[Higher-order delayed Euler--Lagrange and transversality conditions]
\label{thm:E-L}
If $(x(\cdot),z(\cdot))$ is an extremizer to problem $\mathbf{(H_\tau^n)}$
that satisfies the conditions $x^{(k)}(t)=\mu^{(k)}(t)$,
$k=0,\dots,n-1$ and $t\in [a-\tau, a]$, with $\mu \in PC^n ([a-\tau,a];
\mathbb{R}^m)$, then the two Euler--Lagrange equations
\begin{equation}
\label{EL1}
\sum_{l=0}^{n}(-1)^l\frac{d^l}{dt^l}\left(\psi_z(t)\frac{\partial L}{\partial
x^{(l)}}[x;z]^n_\tau(t)+\psi_z(t+\tau)\frac{\partial L}{\partial
x_\tau^{(l)}}[x;z]^n_\tau(t+\tau)\right)=0,
\end{equation}
for $t \in [a,b-\tau]$, and
\begin{equation}
\label{EL2}
\sum_{l=0}^{n}(-1)^l\frac{d^l}{dt^l}\left(\psi_z(t)
\frac{\partial L}{\partial x^{(l)}}[x;z]^n_\tau(t)\right)=0,
\end{equation}
for $t \in [b-\tau,b]$ and $\psi_z$ defined by
\begin{equation*}
\psi_z(t)=e^{\int_t^b\frac{\partial L}{\partial z}[x;z]^n_\tau(\theta)d\theta},
\quad t \in [a,b],
\end{equation*}
hold. Furthermore, the following transversality conditions hold:
\begin{equation}
\label{TC}
\sum_{l=0}^{n-k}(-1)^l\frac{d^l}{dt^l}\left(\psi_z(t)
\frac{\partial L}{\partial x^{(l+k)}}[x;z]^n_\tau(t)\right)\bigg\vert_{t=b}=0,
\end{equation}
$k=1,\dots,n$.
\end{theorem}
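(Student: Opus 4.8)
The plan is to apply Pontryagin's maximum principle (Theorem~\ref{PMP}) to the non-delayed reformulation \eqref{Problem OC form}, in which the $x^{k;i}$ and $z_j$ are the states and $u_i=x^{n;i}$ the controls, and then to translate every resulting condition back to the global time variable through the change of variables \eqref{reduction}. Invoking Remark~\ref{rem:red:bN}, I would assume without loss of generality that $a=0$ and $b=N\tau$. Since the cost of \eqref{Problem OC form} is the Mayer term $z_N(\tau)$, the running cost vanishes and the Hamiltonian \eqref{eq:def:Hamiltonian} takes the form
\[
H=\sum_{k,i}\psi_{x^{k;i}}\,x^{k+1;i}+\sum_{j}\psi_{z_j}\,L_j ,
\]
with multipliers $\psi_{x^{k;i}}$ and $\psi_{z_j}$, and where I write $\frac{\partial L}{\partial x^{(l)}}\big|_i$ (resp. $\frac{\partial L}{\partial x_\tau^{(l)}}\big|_i$) for the partial of $L$ in its $l$th current (resp. delayed) slot, evaluated along the arguments of $L_i$. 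Reading off the adjoint system \eqref{prob P adj syst}: each $z_j$ occurs only in its own $L_j$, so $\dot\psi_{z_j}=-\psi_{z_j}\frac{\partial L}{\partial z}\big|_j$; a state $x^{k;i}$ occurs as a current argument of $L_i$, as a delayed argument of $L_{i+1}$, and through the chain term $\psi_{x^{k-1;i}}$ from $\dot x^{k-1;i}=x^{k;i}$. The optimality condition \eqref{prob P opt condt} for $u_i$, which enters $\dot x^{n-1;i}$, $L_i$ and $L_{i+1}$, reads $\psi_{x^{n-1;i}}+\psi_{z_i}\frac{\partial L}{\partial x^{(n)}}\big|_i+\psi_{z_{i+1}}\frac{\partial L}{\partial x_\tau^{(n)}}\big|_{i+1}=0$.

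The first key step is to identify the multiplier $\psi_z$ of the statement. Each $\psi_{z_j}$ solves a scalar linear equation on $[0,\tau]$, hence is an exponential. The couplings $z_j(0)=z_{j-1}(\tau)$ are endpoint constraints of \eqref{Problem OC form}; carrying them with Lagrange multipliers and imposing the transversality condition \eqref{prob P tr condt} (with the paper's convention $\psi(b)=\mathrm{grad}\,\phi$) shows, after eliminating those multipliers, that the $z$-costates match continuously across junctions, $\psi_{z_{j-1}}(\tau)=\psi_{z_j}(0)$, whereas the payoff $z_N(\tau)$ injects $\psi_{z_N}(\tau)=1$ and forces $\psi_{z_{N+1}}\equiv0$. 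Assembling $\psi_z(s):=\psi_{z_j}(t)$ for $s=t+(j-1)\tau$ then produces a continuous function on $[a,b]$ solving $\dot\psi_z=-\psi_z\frac{\partial L}{\partial z}[x;z]^n_\tau$ with $\psi_z(b)=1$, which is exactly the exponential displayed in the theorem.

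The second key step is to eliminate the $x$-costates. Starting from the optimality condition (which yields $\psi_{x^{n-1;i}}$) and descending through the adjoint equations for $\psi_{x^{n-2;i}},\dots,\psi_{x^{0;i}}$, each $\psi_{x^{k;i}}$ is expressed as an alternating sum of $t$-derivatives of the quantity $\psi_{z_i}\frac{\partial L}{\partial x^{(\cdot)}}\big|_i+\psi_{z_{i+1}}\frac{\partial L}{\partial x_\tau^{(\cdot)}}\big|_{i+1}$; substituting the resulting expression for $\psi_{x^{0;i}}$ into the bottom equation $\dot\psi_{x^{0;i}}=-(\cdots)$ collapses the telescoping sum to $\sum_{l=0}^{n}(-1)^l\frac{d^l}{dt^l}(\cdots)=0$. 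For an interior subinterval $i<N$ both current and delayed partials survive, and rewriting in global time ($\psi_{z_i}\to\psi_z(s)$, $\psi_{z_{i+1}}\to\psi_z(s+\tau)$, with the delayed partials of $L_{i+1}$ evaluated along $[x;z]^n_\tau(s+\tau)$) gives \eqref{EL1} on $[a,b-\tau]$. For the last subinterval $i=N$ the relation $\psi_{z_{N+1}}\equiv0$ removes the delayed contribution and yields \eqref{EL2} on $[b-\tau,b]$. Finally, the free terminal conditions $\psi_{x^{k;N}}(\tau)=0$ for $k=0,\dots,n-1$, substituted into the same alternating-sum expressions for $\psi_{x^{k;N}}$, produce the transversality conditions \eqref{TC} for $k=1,\dots,n$ after the translation to global time.

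I expect the main obstacle to be the rigorous treatment of the coupled endpoint conditions $x^{k;i}(0)=x^{k;i-1}(\tau)$ and $z_j(0)=z_{j-1}(\tau)$: these are precisely what make Guinn's reduction work, and one must verify that the transversality bookkeeping delivers continuity of the assembled multipliers together with the single payoff jump $\psi_{z_N}(\tau)=1$. The remaining difficulty is purely combinatorial, namely keeping track, for each $x^{k;i}$, of which partial derivative of $L$ (current versus delayed) is contributed by $L_i$ as opposed to $L_{i+1}$ and at which shifted time it is evaluated, so that the reindexing back to the variable $t$ reproduces the stated equations.
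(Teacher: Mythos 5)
Your proposal is correct, and it follows the same overall strategy as the paper's proof: reduce $\mathbf{(H_\tau^n)}$ to the non-delayed optimal control problem \eqref{Problem OC form} via Guinn's device, apply Pontryagin's maximum principle, solve the scalar adjoint equations for the $z$-costates, eliminate the $x$-costates by the descending recursion that starts from the optimality condition and telescopes into the alternating sums, and translate everything back to global time. The one place where you genuinely deviate is the treatment of the boundary conditions, and your version is the more careful one. The paper applies Theorem~\ref{PMP} verbatim to \eqref{Problem OC form} and asserts the transversality conditions $\phi_{k;i}(\tau)=0$ and $\psi_j(\tau)=1$ for \emph{all} segments $i,j$; taken literally, these are inconsistent with the global formulas \eqref{psi}--\eqref{phi} that the paper then writes down, since they would make the assembled multipliers discontinuous at the junctions (for instance, the per-segment formula $\psi_j(t)=e^{\int_t^\tau \partial L_j/\partial z_j\,d\theta}$ assembles to $e^{\int_t^{j\tau}\partial L/\partial z\,ds}$ on the $j$th segment, not to $e^{\int_t^{b}\partial L/\partial z\,ds}$). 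What is actually needed --- and what you supply --- is to treat the couplings $x^{k;i}(0)=x^{k;i-1}(\tau)$ and $z_j(0)=z_{j-1}(\tau)$ as endpoint constraints with their own multipliers: eliminating those multipliers yields the junction matching conditions $\psi_{z_{j-1}}(\tau)=\psi_{z_j}(0)$ and $\phi_{k;i-1}(\tau)=\phi_{k;i}(0)$, while the payoff contributes $\psi_{z_N}(\tau)=1$ and the free terminal states of the last segment give the genuine terminal conditions $\phi_{k;N}(\tau)=0$ used for \eqref{TC}. These matching conditions are exactly what make the assembled $\psi_z$ and $\phi_k$ continuous and equal to \eqref{psi}--\eqref{phi}, from which \eqref{EL1}, \eqref{EL2} and \eqref{TC} follow as you describe. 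So: same route, but your junction/transversality bookkeeping repairs a step the paper glosses over, and it is precisely the step you correctly flagged as the main obstacle.
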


\begin{proof}
In order to prove both Euler--Lagrange equations consider problem
$\mathbf{(H_\tau^n)}$ in the optimal control form \eqref{Problem OC form}.
Applying Pontryagin's maximum principle for problem \eqref{problem P} to problem
$\mathbf{(H_\tau^n)}$ in the form \eqref{Problem OC form}, we conclude that there
are multipliers $\phi_{k;i}$ and $\psi_j$ for $k= 1, \dots, n$, $i=0,\dots, N$
and $j=1,\dots, N+1$, such that, with the Hamiltonian defined by
\begin{equation}
\label{hamilt}
H=\sum_{l=1}^{n}\left(\sum_{i=0}^{N}\phi_{l;i}(t)
\cdot x^{l;i}(t)\right)+\sum_{j=1}^{N+1}\psi_j(t)L_j(t),
\end{equation}
the following conditions hold:
\begin{itemize}
\item the optimality conditions
\begin{equation*}
\frac{\partial H}{\partial u_i}=0,
\end{equation*}

\item the adjoint system
\begin{equation*}
\begin{cases}
\dot{x}^{k-1;i}=\frac{\partial H}{\partial \phi_{k;i}},\\
\dot{z}_j=\frac{\partial H}{\partial \psi_j},\\
\dot{\phi}_{k;i}=-\frac{\partial H}{\partial x^{k-1;i}},\\
\dot{\psi_j}=-\frac{\partial H}{\partial z_j},
\end{cases}
\end{equation*}
\item the transversality conditions
\begin{equation*}
\begin{cases}
\phi_{k;i}(\tau)=0,\\
\psi_j(\tau)=1.
\end{cases}
\end{equation*}
\end{itemize}
\noindent Observe that the forth equation in the adjoint system is equivalent
to the differential equation $\dot{\psi_j}=-\psi_j\frac{\partial L_j}{\partial z_j}$.
Together with the transversality condition, we obtain that
the multipliers $\psi_j$, $j=1, \dots, N+1$, are given by
\begin{equation*}
\psi_j(t)=e^{\int_t^\tau\frac{\partial L_j}{\partial z_j}d\theta}.
\end{equation*}
From the third equation in the adjoint system, we obtain that
\begin{equation}
\label{phi_derivative}
\dot{\phi}_{k;i}=-\phi_{k-1;i}-\psi_i\frac{\partial L_i}{\partial x^{k-1;i}}
-\psi_{i+1}\frac{\partial L_{i+1}}{\partial x^{k-1;i}},
\end{equation}
$k, i=1,\dots,n$, which for the particular case of $k=n$ reduces to
\begin{equation*}
\dot{\phi}_{n;i}=-\phi_{n-1;i}-\psi_i\frac{\partial L_i}{\partial x^{n-1;i}}
-\psi_{i+1}\frac{\partial L_{i+1}}{\partial x^{n-1;i}}.
\end{equation*}
This equality, together with the
differentiation of the optimality condition
\begin{equation*}
\begin{aligned}
\dot{\phi}_{n;i}=&-\frac{d}{dt}\left(\psi_i\frac{\partial L_i}{\partial u_i}\right)
-\frac{d}{dt}\left(\psi_{i+1}\frac{\partial L_{i+1}}{\partial u_i}\right)\\
=&-\frac{d}{dt}\left(\psi_i\frac{\partial L_i}{\partial x^{n;i}}+\psi_{i+1}
\frac{\partial L_{i+1}}{\partial x^{n;i}}\right),
\end{aligned}
\end{equation*}
leads to
\begin{equation*}
\phi_{n-1;i}=-\psi_i\frac{\partial L_i}{\partial x^{n-1;i}}
-\psi_{i+1}\frac{\partial L_{i+1}}{\partial x^{n-1;i}}
+\frac{d}{dt}\left(\psi_i\frac{\partial L_i}{\partial x^{n;i}}
+\psi_{i+1}\frac{\partial L_{i+1}}{\partial x^{n;i}}\right).
\end{equation*}
By differentiation of the previous expression and comparison with
\eqref{phi_derivative} for $k=n-1$, we find the expression
for $\phi_{n-2;i}$:
\begin{multline*}
\phi_{n-2;i}=-\psi_i\frac{\partial L_i}{\partial x^{n-2;i}}
-\psi_{i+1}\frac{\partial L_{i+1}}{\partial x^{n-2;i}}\\
+\frac{d}{dt}\left(\psi_i\frac{\partial L_i}{\partial x^{n-1;i}}
+\psi_{i+1}\frac{\partial L_{i+1}}{\partial x^{n-1;i}}\right)
-\frac{d^2}{dt^2}\left(\psi_i\frac{\partial L_i}{\partial x^{n;i}}
+\psi_{i+1}\frac{\partial L_{i+1}}{\partial x^{n;i}}\right).
\end{multline*}
Using recursively the technique of derivation of $\phi_{k;i}$ and comparison
with \eqref{phi_derivative}, we find the expression
for $\phi_{k;i}$ ($k=1, \dots, n$):
\begin{equation}
\label{phi_expression}
\phi_{k;i}=\sum_{l=0}^{n-k}(-1)^{{l+1}}\frac{d^l}{dt^l}\left(\psi_i
\frac{\partial L_i}{\partial x^{l+k;i}}+\psi_{i+1}
\frac{\partial L_{i+1}}{\partial x^{l+k;i}}\right),\quad i=1, \dots, N.
\end{equation}
Considering $\phi_{1;i}$ given by the previous equation and comparing it with
\begin{equation*}
\phi_{1;i}=-\dot{\phi}_{2;i}-\psi_i\frac{\partial L_i}{\partial x^{1;i}}
-\psi_{i+1}\frac{\partial L_{i+1}}{\partial x^{1;i}},
\end{equation*}
given by \eqref{phi_derivative} for $k=2$, we obtain that
\begin{equation}
\label{EL1_proof}
\sum_{l=0}^{n}(-1)^l\frac{d^l}{dt^l}\left(\psi_i\frac{\partial
L_i}{\partial x^{l;i}}+\psi_{i+1}\frac{\partial L_{i+1}}{\partial x^{l;i}}\right)
=0,\quad i=1, \dots, N.
\end{equation}
Since $L_{N+1}=0$, the previous equation for $i=N$ reduces to
\begin{equation}
\label{EL2_proof}
\sum_{l=0}^{n}(-1)^l\frac{d^l}{dt^l}\left(\psi_N
\frac{\partial L_N}{\partial x^{l;N}}\right)=0.
\end{equation}
The final step is to rewrite the results obtained inverting the changes
of variables \eqref{reduction}. For this purpose, define
$\psi_z(t)$, $t \in [0, b+\tau]$, by
\begin{equation*}
\psi_z(t)=\psi_i(t-(i-1)\tau), \quad (i-1)\tau\leq t \leq i\tau,
\quad i = 1, \ldots, N+1,
\end{equation*}
and $\phi_k(t)$, $k=1,\dots,n$, $t \in [-\tau, b]$, by
\begin{equation*}
\label{multipliers_phi}
\phi_k(t)= \phi_{k;i}(t-(i-1)\tau),
\quad (i-1)\tau\leq t \leq i \tau,
\quad i = 1, \ldots, N.
\end{equation*}
This allows to write
\begin{equation}
\label{psi}
\psi_z(t)=e^{\int_t^b\frac{\partial L}{\partial z}[x;z]^n_\tau(\theta)d\theta},
\quad t \in [a,b],
\end{equation}
and
\begin{equation}
\label{phi}
\begin{gathered}
\phi_{k}(t)=\sum_{l=0}^{n-k}(-1)^{l+1}\frac{d^l}{dt^l}\left(\psi_{z}(t+\tau)
\frac{\partial L}{\partial x_\tau^{(l+k)}}[x;z]^n_\tau(t+\tau)\right),
\quad t \in [a-\tau,a],\\
\phi_{k}(t)=\sum_{l=0}^{n-k}(-1)^{l+1}\frac{d^l}{dt^l}\Biggl(\psi_z(t)
\frac{\partial L}{\partial x^{(l+k)}}[x;z]^n_\tau(t)\\
\qquad\quad +\psi_{z}(t+\tau)
\frac{\partial L}{\partial x_\tau^{(l+k)}}[x;z]^n_\tau(t+\tau)\Biggr),
\quad t\in [a,b],
\end{gathered}
\end{equation}
$k=1, \dots, n$. Note that if $t\in[b-\tau, b]$, then $L[x;z]_\tau^n(t+\tau)$
is, by definition, null. Finally, equations \eqref{EL1_proof}--\eqref{EL2_proof} 
lead to the Euler--Lagrange equations for the higher-order problem of Herglotz
with time delay $\mathbf{(H_\tau^n)}$:
\begin{equation*}
\sum_{l=0}^{n}(-1)^l\frac{d^l}{dt^l}\left(\psi_z(t)\frac{\partial L}{\partial
x^{(l)}}[x;z]^n_\tau(t)+\psi_z(t+\tau)\frac{\partial L}{\partial
x_\tau^{(l)}}[x,z]^n_\tau(t+\tau)\right)=0
\end{equation*}
for $t \in [a,b-\tau]$ and
\begin{equation*}
\sum_{l=0}^{n}(-1)^l\frac{d^l}{dt^l}\left(\psi_z(t)
\frac{\partial L}{\partial x^{(l)}}[x;z]^n_\tau(t)\right)=0
\end{equation*}
for $t \in [b-\tau,b]$. From \eqref{phi_expression} and the transversality
conditions for $\phi_{k;i}$, we obtain the transversality
conditions $\phi_k(b)=0$, that is,
\begin{equation*}
\sum_{l=0}^{n-k}(-1)^l\frac{d^l}{dt^l}\left(\psi_z(t)\frac{\partial
L}{\partial x^{(l+k)}}[x;z]^n_\tau(t)\right)\bigg\vert_{t=b}=0,
\end{equation*}
$k=1, \dots, n$.
\end{proof}

\begin{definition}[Extremal to problem $\mathbf{(H_\tau^n)}$]
We say that an admissible pair $(x(\cdot),z(\cdot))$ is an extremal to problem
$\mathbf{(H_\tau^n)}$ if it satisfies the Euler--Lagrange equations
\eqref{EL1}--\eqref{EL2} and the transversality conditions \eqref{TC}.
\end{definition}

Theorem~\ref{thm:E-L} gives a generalization of the Euler--Lagrange equation
and transversality conditions for the higher-order problem of Herglotz
presented by the authors in \cite{MyArt01}. It is also a generalization
of the results in \cite{MyArt03,MyArt04}.

\begin{corollary}[cf. \cite{MyArt01,MyArt04}]
\label{cor:E-L tau=0}
If $(x(\cdot),z(\cdot))$ is an extremizer
to the higher-order problem of Herglotz
\begin{equation}
\label{eq:prb:cor:E-L tau=0}
\begin{gathered}
z(b)\longrightarrow \textrm{extr},\\
\dot{z}(t)=L\left(t,x(t),\dot{x}(t),\dots, x^{(n)}(t),z(t)\right),
\quad t \in [a,b],\\
z(a)=\gamma \in \mathbb{R}, \quad x^{(k)}(a)=\alpha_k, \quad \alpha_k
\in \mathbb{R}^m, \quad k=0,\dots,n-1,
\end{gathered}
\end{equation}
then the Euler--Lagrange equation
\begin{equation*}
\sum_{l=0}^{n}(-1)^l\frac{d^l}{dt^l}\left(\psi_z(t)
\frac{\partial L}{\partial x^{(l)}}[x;z]^n_0(t)\right)=0
\end{equation*}
holds for  $t \in [a,b]$, where $\psi_z$ is defined in \eqref{psi}.
Furthermore, the following transversality conditions hold:
\begin{equation*}
\sum_{l=0}^{n-k}(-1)^l\frac{d^l}{dt^l}\left(\psi_z(t)\frac{\partial
L}{\partial x^{(l+k)}}[x;z]^n_0(t)\right)\bigg\vert_{t=b}=0,
\end{equation*}
$k=1,\dots,n$.
\end{corollary}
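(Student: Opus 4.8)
The plan is to obtain this statement as the specialization of Theorem~\ref{thm:E-L} to the case $\tau = 0$. First I would observe that the higher-order problem of Herglotz \eqref{eq:prb:cor:E-L tau=0} is precisely problem $\mathbf{(H_\tau^n)}$ with vanishing delay: setting $\tau = 0$ reduces the initial interval $[a-\tau, a]$ to the single point $\{a\}$, so the initial-function requirement becomes the point conditions $x^{(k)}(a) = \alpha_k$ of \eqref{eq:prb:cor:E-L tau=0}, and the Lagrangian $L(t, x, \dot{x}, \ldots, x^{(n)}, z)$ is a delayed Lagrangian that carries no dependence on the delayed slots $x_\tau^{(0)}, \ldots, x_\tau^{(n)}$. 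Under this identification every hypothesis of Theorem~\ref{thm:E-L} is satisfied, and any extremizer of \eqref{eq:prb:cor:E-L tau=0} is an extremizer of $\mathbf{(H_\tau^n)}$ with $\tau = 0$.

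I would then read off the conclusions \eqref{EL1}, \eqref{EL2} and \eqref{TC} under this specialization. Since $L$ does not depend on the delayed variables we have $\frac{\partial L}{\partial x_\tau^{(l)}} \equiv 0$ for every $l$, so the delayed term in \eqref{EL1} vanishes; moreover, with $\tau = 0$ the subinterval $[a, b-\tau]$ is all of $[a, b]$, the subinterval $[b-\tau, b]$ degenerates to $\{b\}$, and $\psi_z(t+\tau)$ coincides with $\psi_z(t)$. Hence \eqref{EL1} collapses to
$$\sum_{l=0}^{n}(-1)^l\frac{d^l}{dt^l}\left(\psi_z(t)\frac{\partial L}{\partial x^{(l)}}[x;z]^n_0(t)\right)=0, \quad t \in [a,b],$$
which is the asserted Euler--Lagrange equation, while \eqref{EL2} adds nothing beyond its value at $b$. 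The transversality conditions \eqref{TC} are evaluated at $t = b$ and involve only $\psi_z$ and the non-delayed partials, so they transcribe verbatim into the stated conditions; the formula \eqref{psi} for $\psi_z$ is itself unaffected, since it refers to the delay only through the upper limit $b$ of integration.

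The sole point demanding care—and the main, if modest, obstacle—is the bookkeeping of the two derivative slots as the delay disappears. One must not fold the delayed arguments into the current ones inside a single Lagrangian and thereby double-count through the chain rule: the correct reading is that the Lagrangian of \eqref{eq:prb:cor:E-L tau=0} has no functional dependence on the delayed variables at all, so each $\frac{\partial L}{\partial x_\tau^{(l)}}$ is genuinely zero rather than equal to $\frac{\partial L}{\partial x^{(l)}}$. With this resolved, the reduction is immediate.
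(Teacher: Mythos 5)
Your proposal is correct and is essentially the paper's own proof, which simply invokes Theorem~\ref{thm:E-L} with $\tau=0$; your additional bookkeeping (vanishing of $\frac{\partial L}{\partial x_\tau^{(l)}}$, collapse of $[b-\tau,b]$, identification of $\psi_z(t+\tau)$ with $\psi_z(t)$) just makes explicit what the paper leaves implicit. The remark about not folding the delayed slots into the current ones is exactly the right reading and matches the paper's intent.
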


\begin{proof}
Consider Theorem~\ref{thm:E-L} with no delay,
that is, with $\tau=0$.
\end{proof}

Theorem~\ref{thm:E-L} is also a generalization of the Euler--Lagrange equations
for the first-order problem of Herglotz with time delay obtained in \cite{MyArt02}.

\begin{corollary}[cf. \cite{MyArt02}]
\label{cor:E-L n=1}
If $(x(\cdot),z(\cdot))$ is an extremizer to the first-order problem
of Herglotz with time delay
\begin{equation}
\label{eq:cor:E-L n=1}
\begin{gathered}
z(b)\longrightarrow \textrm{extr},\\
\dot{z}(t)=L\left(t,x(t),\dot{x}(t),x(t-\tau),\dot{x}(t-\tau),z(t)\right),
\quad t \in [a,b], \\
z(a)=\gamma \in \mathbb{R}, \quad x(t)=\mu(t), \quad t \in [a-\tau,a],
\end{gathered}
\end{equation}
for a given piecewise initial function $\mu$,
then the Euler--Lagrange equations
\begin{multline*}
\psi_z(t)\frac{\partial L}{\partial x}[x;z]^1_\tau(t)+\psi_z(t+\tau)
\frac{\partial L}{\partial x_\tau}[x,z]^1_\tau(t+\tau)\\
-\frac{d}{dt}\left(\psi_z(t)\frac{\partial L}{\partial \dot{x}}[x;z]^1_\tau(t)
+\psi_z(t+\tau)\frac{\partial L}{\partial \dot{x}_\tau}[x,z]^1_\tau(t+\tau)\right)=0,
\end{multline*}
for $t \in [a,b-\tau]$, and
\begin{equation*}
\psi_z(t)\frac{\partial L}{\partial x}[x;z]^1_\tau(t)
-\frac{d}{dt}\left(\psi_z(t)
\frac{\partial L}{\partial \dot{x}}[x;z]^1_\tau(t)\right)=0,
\end{equation*}
for $t \in [b-\tau,b]$, hold.
\end{corollary}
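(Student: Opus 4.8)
The plan is to obtain this result as the special case $n=1$ of Theorem~\ref{thm:E-L}. First I would check that the hypotheses are met: the first-order delayed problem \eqref{eq:cor:E-L n=1} is exactly problem $\mathbf{(H_\tau^1)}$, with the given piecewise initial function $\mu$ furnishing the data $x(t)=\mu(t)$ on $[a-\tau,a]$, so an extremizer of \eqref{eq:cor:E-L n=1} is an extremizer of $\mathbf{(H_\tau^1)}$ and Theorem~\ref{thm:E-L} applies with $\psi_z$ defined by \eqref{psi}. The substance of the proof is then purely the bookkeeping of the finite sums \eqref{EL1} and \eqref{EL2} when the upper index is $n=1$: the summation index $l$ ranges only over $0$ and $1$, so the operator $\sum_{l=0}^{n}(-1)^l\frac{d^l}{dt^l}(\cdots)$ collapses to a zeroth-order term minus a single total derivative. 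Throughout I would invoke the paper's conventions $x^{(0)}(t)=x(t)$, $x^{(1)}(t)=\dot{x}(t)$, and correspondingly $x_\tau^{(0)}(t)=x_\tau(t)$, $x_\tau^{(1)}(t)=\dot{x}_\tau(t)$.

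For $t\in[a,b-\tau]$, I would expand \eqref{EL1} with $n=1$ term by term. The $l=0$ contribution is $\psi_z(t)\frac{\partial L}{\partial x}[x;z]^1_\tau(t)+\psi_z(t+\tau)\frac{\partial L}{\partial x_\tau}[x;z]^1_\tau(t+\tau)$, and the $l=1$ contribution is $-\frac{d}{dt}\bigl(\psi_z(t)\frac{\partial L}{\partial \dot{x}}[x;z]^1_\tau(t)+\psi_z(t+\tau)\frac{\partial L}{\partial \dot{x}_\tau}[x;z]^1_\tau(t+\tau)\bigr)$. Adding the two and equating to zero yields precisely the first displayed Euler--Lagrange equation of the corollary. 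Likewise, for $t\in[b-\tau,b]$, expanding \eqref{EL2} with $n=1$ gives $\psi_z(t)\frac{\partial L}{\partial x}[x;z]^1_\tau(t)-\frac{d}{dt}\bigl(\psi_z(t)\frac{\partial L}{\partial \dot{x}}[x;z]^1_\tau(t)\bigr)=0$, which is the second asserted equation. This is consistent with the remark in the proof of Theorem~\ref{thm:E-L} that $L[x;z]^1_\tau(t+\tau)$ is taken to be null for $t\in[b-\tau,b]$, which is exactly why the delayed terms drop out on this final subinterval.

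There is essentially no obstacle here, since the corollary is a pure specialization and no new estimate or construction is required; the only care needed is to match the index conventions so that the two surviving terms line up with the stated expressions. For completeness I would observe that the transversality condition \eqref{TC} with $n=1$ reduces to the single relation at $k=1$, namely $\psi_z(b)\frac{\partial L}{\partial \dot{x}}[x;z]^1_\tau(b)=0$; however, since the corollary statement asserts only the Euler--Lagrange equations, in agreement with the formulation in \cite{MyArt02}, I would not include it among the claimed conclusions. Thus the proof amounts to writing ``Consider Theorem~\ref{thm:E-L} with $n=1$'' and carrying out the two short expansions above.
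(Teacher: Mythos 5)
Your proposal is correct and coincides with the paper's own proof, which consists exactly of the one-line instruction ``Consider Theorem~\ref{thm:E-L} with $n=1$''; your term-by-term expansion of \eqref{EL1} and \eqref{EL2} simply makes explicit the bookkeeping the paper leaves to the reader. Your side remark that the transversality condition \eqref{TC} specializes to $\psi_z(b)\frac{\partial L}{\partial \dot{x}}[x;z]^1_\tau(b)=0$ but is deliberately omitted from the corollary's conclusions is also accurate.
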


\begin{proof}
Consider Theorem~\ref{thm:E-L} with $n=1$.
\end{proof}

\begin{theorem}[Higher-order delayed DuBois--Reymond condition]
\label{thm:DB-R}
If the pair $(x(\cdot), z(\cdot))$ is an extremal
to problem $\mathbf{(H_\tau^n)}$, then
\begin{equation}
\label{eq:HODR:cor}
\frac{d}{dt}\left(\sum_{k=1}^n\phi_k(t) \cdot x^{(k)}(t)
+\psi_z(t) L[x;z]^n_\tau(t) \right)
=\psi_z(t)\frac{\partial L}{\partial t}[x;z]^n_\tau(t),
\end{equation}
where $\psi_z$ and $\phi_k$ are defined by \eqref{psi}
and \eqref{phi}, respectively.
\end{theorem}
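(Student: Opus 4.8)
The plan is to reuse the reduction already built in the previous subsection rather than to attack $\mathbf{(H_\tau^n)}$ directly. Since $(x(\cdot),z(\cdot))$ is an extremal, the construction in the proof of Theorem~\ref{thm:E-L} produces multipliers $\phi_{l;i}$ and $\psi_j$ turning $(x^{k;i},z_j,u_i,\phi_{l;i},\psi_j)$ into a Pontryagin extremal of the non-delayed optimal control problem \eqref{Problem OC form}, with Hamiltonian $H$ as in \eqref{hamilt}. I would therefore apply the DuBois--Reymond condition of optimal control, Theorem~\ref{thm DB-R OC}, to this extremal, obtaining $\frac{dH}{dt}=\frac{\partial H}{\partial t}$ on $[0,\tau]$, and then translate this single identity back into the original delayed variables.

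The first computation is $\frac{\partial H}{\partial t}$. In $H=\sum_{l=1}^{n}\sum_{i=0}^{N}\phi_{l;i}(t)\cdot x^{l;i}(t)+\sum_{j=1}^{N+1}\psi_j(t)L_j(t)$ the partial derivative with respect to the explicit time slot treats the multipliers $\phi_{l;i},\psi_j$ and the trajectories $x^{l;i}$ as frozen, so only the first argument of each $L_j$ survives; hence $\frac{\partial H}{\partial t}=\sum_{j=1}^{N+1}\psi_j(t)\frac{\partial L_j}{\partial t}(t)$. Theorem~\ref{thm DB-R OC} then reads $\frac{d}{dt}\bigl(\sum_{l=1}^{n}\sum_{i=0}^{N}\phi_{l;i}x^{l;i}+\sum_{j=1}^{N+1}\psi_j L_j\bigr)=\sum_{j=1}^{N+1}\psi_j\frac{\partial L_j}{\partial t}$. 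Throughout I would keep in mind that the optimality condition $\partial H/\partial u_i=0$ is what removes the otherwise problematic derivative-of-control contributions; concretely this is encoded in the expressions \eqref{phi_derivative}--\eqref{phi_expression} for the $\phi_{l;i}$, and in $\dot\psi_j=-\psi_j\,\partial L_j/\partial z_j$.

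The second step inverts the change of variables \eqref{reduction}. Using the unfolding definitions of $\psi_z$ and $\phi_k$ from the proof of Theorem~\ref{thm:E-L}, together with $x^{k;i}(t)=x^{(k)}(t+(i-1)\tau)$, $L_i(t)=L[x;z]^n_\tau(t+(i-1)\tau)$ and $L_{N+1}\equiv 0$, I would identify the index $i$ with the subinterval $[(i-1)\tau,i\tau]$ of $[a,b]$ and rewrite the bracketed quantity as $\sum_{k=1}^{n}\phi_k\cdot x^{(k)}+\psi_z L[x;z]^n_\tau$ and the right-hand side as $\psi_z\frac{\partial L}{\partial t}[x;z]^n_\tau$, which is exactly \eqref{eq:HODR:cor}. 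The definitions \eqref{psi} of $\psi_z$ and \eqref{phi} of $\phi_k$ are tailored to make this matching work, the two-term form of $\phi_k$ on $[a,b]$ being precisely the unfolded pairing of the layer-$i$ and layer-$(i+1)$ contributions appearing in \eqref{phi_expression}.

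I expect the unfolding to be the main obstacle, exactly because the reduced dynamics couple each subinterval $i$ to its neighbours through the delayed arguments: $L_i$ carries the layer $i-1$, while $L_{i+1}$ carries layer $i$. The delicate point will be to track these forward and backward delayed terms as one passes from the single identity on $[0,\tau]$ to the pointwise statement on $[a,b]$, and to confirm that on the last subinterval $[b-\tau,b]$ the forward contributions disappear because $L[x;z]^n_\tau(t+\tau)$ is null there. As an independent cross-check I would differentiate the left-hand side of \eqref{eq:HODR:cor} directly, inserting $\dot\psi_z=-\psi_z\frac{\partial L}{\partial z}[x;z]^n_\tau$, the Euler--Lagrange equations \eqref{EL1}--\eqref{EL2}, and $\dot z=L[x;z]^n_\tau$; the current-time and $z$-derivative terms should cancel pairwise against $\psi_z\,\dot L$, leaving $\psi_z\frac{\partial L}{\partial t}[x;z]^n_\tau$, and I would pay particular attention to how the remaining delayed terms organize themselves across adjacent subintervals, since it is there that the bookkeeping is most error-prone.
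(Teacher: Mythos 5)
Your proposal retraces the paper's own proof --- reduce to \eqref{Problem OC form}, apply Theorem~\ref{thm DB-R OC} to the Hamiltonian \eqref{hamilt}, and unfold --- but the step you set aside as ``delicate bookkeeping'' is precisely where the argument breaks, and it is not bookkeeping but a logical gap. Theorem~\ref{thm DB-R OC} yields a \emph{single} scalar identity on $[0,\tau]$, and in the original variables the Hamiltonian \eqref{hamilt} is not the quantity differentiated in \eqref{eq:HODR:cor} but its \emph{sum over all layers}: setting $P(s):=\sum_{k=1}^n\phi_k(s)\cdot x^{(k)}(s)+\psi_z(s)\,L[x;z]^n_\tau(s)$ and $s_i:=t+(i-1)\tau$, the unfolded identity reads (plus terms from the layer $i=0$)
\begin{equation*}
\sum_{i=1}^{N}\left[\frac{dP}{ds}(s_i)-\psi_z(s_i)\frac{\partial L}{\partial t}[x;z]^n_\tau(s_i)\right]=0,
\qquad t\in[0,\tau],
\end{equation*}
whereas \eqref{eq:HODR:cor} asserts that \emph{every summand} vanishes. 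One scalar equation per $t$ cannot deliver $N$ pointwise equations, and neither your write-up nor the paper supplies an argument for this passage.

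Moreover, the cross-check you propose would show that the summands do \emph{not} vanish individually, so the gap cannot be closed by better bookkeeping. The functions \eqref{phi} satisfy identically $\phi_n=-A_n$ and $\dot\phi_k=-\phi_{k-1}-A_{k-1}$ ($k=2,\dots,n$), where $A_j(t):=\psi_z(t)\frac{\partial L}{\partial x^{(j)}}[x;z]^n_\tau(t)+\psi_z(t+\tau)\frac{\partial L}{\partial x_\tau^{(j)}}[x;z]^n_\tau(t+\tau)$ (expressions beyond $b$ being null), while $\dot\phi_1=-A_0$ is exactly \eqref{EL1}--\eqref{EL2}. Inserting these together with $\dot\psi_z=-\psi_z\frac{\partial L}{\partial z}$ and $\dot z=L$ into $\frac{dP}{dt}$ gives, along any extremal,
\begin{equation*}
\frac{dP}{dt}(t)-\psi_z(t)\frac{\partial L}{\partial t}[x;z]^n_\tau(t)=F(t)-F(t+\tau),
\qquad
F(t):=\psi_z(t)\sum_{j=0}^{n}\frac{\partial L}{\partial x_\tau^{(j)}}[x;z]^n_\tau(t)\cdot x_\tau^{(j+1)}(t),
\end{equation*}
because the multiplier terms cancel the delayed partials evaluated at $t+\tau$ while the chain rule produces them at $t$; this difference telescopes in the layer sum (which is all that Theorem~\ref{thm DB-R OC} asserts) but has no reason to vanish pointwise. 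Indeed it does not: take $n=m=1$, $a=0$, $b=2\tau$, $L=\frac{1}{2}x_\tau^2$, $\mu(t)=t$. Then $\psi_z\equiv1$ and $\phi_1\equiv0$; equation \eqref{EL1} forces $x\equiv0$ on $[0,\tau]$, while \eqref{EL2} and \eqref{TC} are trivial, so $x=\mu$ on $[-\tau,0]$, $x\equiv0$ on $[0,2\tau]$ (with $z$ solving $\dot z=L$) is an extremal; yet on $(0,\tau)$ the left-hand side of \eqref{eq:HODR:cor} equals $\frac{d}{dt}\frac{1}{2}x(t-\tau)^2=t-\tau\neq0$, the right-hand side being $0$. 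A second, related defect: \eqref{Problem OC form} treats the layer $i=0$ as free states with control $u_0$, and in this example the adjoint equation and transversality force $\phi_{1;0}(t)=-\frac{1}{2}(\tau-t)^2$, so the optimality condition $\partial H/\partial u_0=\phi_{1;0}=0$ fails; the lifted pair is therefore not a Pontryagin extremal of \eqref{Problem OC form}, and the hypothesis of Theorem~\ref{thm DB-R OC} is not verified in the first place. Both problems are inherited from the paper's own three-line proof, which your proposal reproduces faithfully; repairing the statement appears to require exactly the additional hypothesis, equivalent to $F(t)=F(t+\tau)$, that was assumed in \cite{MyArt02} and that this paper claims to avoid.
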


\begin{proof}
Consider problem $\mathbf{(H_\tau^n)}$ in the formulation
given by \eqref{Problem OC form}. Theorem~\ref{thm DB-R OC} asserts that
$\frac{dH}{dt}=\frac{\partial H}{\partial t}$ for $H$ given by \eqref{hamilt}.
We obtain \eqref{eq:HODR:cor} by writing $H$ in the variables $\phi_k$ and $\psi_z$.
\end{proof}

Theorem~\ref{thm:DB-R} is also a generalization of the DuBois--Reymond condition
presented in \cite{MyArt02} for the first-order problem of Herglotz with time delay.
In that paper, for technical reasons, we added an additional hypothesis
that we are able to avoid here.

\begin{corollary}[cf. \cite{MyArt02}]
\label{cor:DR-R n=1}
If $(x(\cdot),z(\cdot))$ is an extremizer to the first-order problem
of Herglotz with time delay \eqref{eq:cor:E-L n=1}, then
\begin{multline*}
\psi_z(t)\frac{\partial L}{\partial t}[x;z]^1_\tau(t)
=\frac{d}{dt}\Biggl(\psi_z(t) L[x;z]^1_\tau(t)\\
-\left(\psi_z(t)\frac{\partial L}{\partial
\dot{x}}[x;z]^1_\tau(t)+\psi_z(t+\tau)\frac{\partial L}{\partial
\dot{x}_\tau}[x;z]^1_\tau(t+\tau)\right) \dot{x}(t)\Biggr),
\end{multline*}
where $\psi_z$ is defined by \eqref{psi}.
\end{corollary}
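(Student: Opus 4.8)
The plan is to read off the claim as the $n=1$ specialization of the higher-order DuBois--Reymond condition, Theorem~\ref{thm:DB-R}. The one gap to bridge at the outset is that Theorem~\ref{thm:DB-R} is stated for an \emph{extremal}, whereas the hypothesis here is that $(x(\cdot),z(\cdot))$ is an \emph{extremizer}. This is immediate from Theorem~\ref{thm:E-L}: every extremizer of \eqref{eq:cor:E-L n=1} satisfies the Euler--Lagrange equations and transversality conditions and is therefore an extremal, so Theorem~\ref{thm:DB-R} is applicable.

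Next I would set $n=1$ in the identity \eqref{eq:HODR:cor}. The sum $\sum_{k=1}^n \phi_k(t)\cdot x^{(k)}(t)$ then collapses to the single term $\phi_1(t)\cdot\dot{x}(t)$, and it remains to insert the explicit value of $\phi_1$. Taking $n=k=1$ in the second line of \eqref{phi} (the range $l=0,\dots,n-k$ then contains only $l=0$) gives
\begin{equation*}
\phi_1(t)=-\left(\psi_z(t)\frac{\partial L}{\partial \dot{x}}[x;z]^1_\tau(t)
+\psi_z(t+\tau)\frac{\partial L}{\partial \dot{x}_\tau}[x;z]^1_\tau(t+\tau)\right),
\end{equation*}
with $\psi_z$ as in \eqref{psi}. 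Substituting this into \eqref{eq:HODR:cor} and transposing the $\partial L/\partial t$ term to the right-hand side reproduces verbatim the displayed identity of the corollary.

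I expect no genuine obstacle here, since the argument is a direct specialization; the only points requiring care are bookkeeping. First, one should invoke the convention recorded in the proof of Theorem~\ref{thm:E-L} that $L[x;z]^1_\tau(t+\tau)$, and hence the factor $\psi_z(t+\tau)\,\partial L/\partial\dot{x}_\tau[x;z]^1_\tau(t+\tau)$, is null for $t\in[b-\tau,b]$, so that the single expression for $\phi_1$ valid on $[a,b]$ makes the stated identity hold throughout the whole interval. Second, one should verify that the sign conventions agree: the factor $(-1)^{l+1}$ evaluated at $l=0$ supplies the overall minus sign in $\phi_1$, which is precisely what converts the $\phi_1(t)\dot{x}(t)$ contribution in \eqref{eq:HODR:cor} into the subtracted bracket multiplying $\dot{x}(t)$ in the corollary. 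Beyond these two checks the proof is complete.
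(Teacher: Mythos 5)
Your proof is correct and takes exactly the same route as the paper, whose entire proof is ``Consider Theorem~\ref{thm:DB-R} with $n=1$.'' The extra details you supply --- bridging extremizer to extremal via Theorem~\ref{thm:E-L}, substituting the explicit $\phi_1$ from \eqref{phi}, and noting the vanishing convention on $[b-\tau,b]$ --- are precisely the bookkeeping the paper leaves implicit, and they check out.
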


\begin{proof}
Consider Theorem~\ref{thm:DB-R} with $n=1$.
\end{proof}


\subsection{Higher-order Noether's symmetry theorem with time delay}

Before presenting a Noether theorem to problem $\mathbf{(H_\tau^n)}$,
we introduce the notion of invariance.

\begin{definition}[Invariance of problem $\mathbf{(H_\tau^n)}$]
\label{def inv_PH_n}
Let $h^s$ be a one-parameter family of invertible $C^1$ maps
$h^s:[a-\tau,b]\times \mathbb{R}^m \times \mathbb{R}
\longrightarrow \mathbb{R}\times \mathbb{R}^m \times \mathbb{R}$,
\begin{equation*}
\begin{gathered}
h^s(t,x(t),z(t))=(\mathcal{T}^s[x;z]^n_\tau(t),
\mathcal{X}^s[x;z]^n_\tau(t),\mathcal{Z}^s[x;z]^n_\tau(t)),\\
h^0(t,x,z)=(t,x,z), \quad \forall (t,x,z) \in [a-\tau,b]
\times \mathbb{R}^m  \times \mathbb{R}.
\end{gathered}
\end{equation*}
Problem $\mathbf{(H_\tau^n)}$ is said to be invariant
under the transformations $h^s$, if for all admissible pairs
$(x(\cdot),z(\cdot))$ the following two conditions hold:
\begin{equation}
\label{eq inv PH_n_1}
\left(\frac{z(b)}{b-a}+\xi s + o(s)\right)
\frac{d\mathcal{T}^s}{dt}[x;z]^n_\tau(t)
=\frac{z(b)}{b-a}
\end{equation}
for some constant $\xi$ and		
\begin{equation}
\begin{split}
\frac{d \mathcal{Z}^s}{dt}[x;z]^n_\tau(t)
&= \frac{d\mathcal{T}^s}{dt}[x;z]^n_\tau(t) \,
L\Biggl(\mathcal{T}^s[x;z]^n_\tau(t),\mathcal{X}^s[x;z]^n_\tau(t),\\
&\ \frac{d\mathcal{X}^s}{d\mathcal{T}^s}[x;z]^n_\tau(t),\ldots,
\frac{d^n\mathcal{X}^s}{d(\mathcal{T}^s)^n}[x;z]^n_\tau(t),
\mathcal{X}^s[x,z]^n_\tau(t-\tau),\\
&\ \frac{d\mathcal{X}^s}{d\mathcal{T}^s}[x,z]^n_\tau(t-\tau),\ldots,
\frac{d^n\mathcal{X}^s}{d(\mathcal{T}^s)^n}[x,z]^n_\tau(t-\tau),
\mathcal{Z}^s[x;z]^n_\tau(t)\Biggr),
\end{split}
\end{equation}
where
\begin{equation*}
\begin{split}
\frac{d\mathcal{X}^s}{d\mathcal{T}^s}[x;z]^n_\tau(t)
&=\frac{\frac{d\mathcal{X}^s}{dt}[x;z]^n_\tau(t)}{
\frac{d\mathcal{T}^s}{dt}[x;z]^n_\tau(t)},\\
\frac{d^k\mathcal{X}^s}{d(\mathcal{T}^s)^k}[x;z]^n_\tau(t)
&=\frac{\frac{d}{dt}\left(\frac{d^{k-1}\mathcal{X}^s}{d(\mathcal{T}^s)^{k-1}}
[x;z]^n_\tau(t)\right)}{\frac{d\mathcal{T}^s}{dt}[x;z]^n_\tau(t)},
\end{split}
\end{equation*}
$k=2,\ldots,n$.
\end{definition}

Now we generalize the higher-order Noether's theorem of \cite{MyArt04}
to the more general case of variational problems of Herglotz type with time delay.

\begin{theorem}[Higher-order delayed Noether's theorem]
\label{thm:Noether}
If problem $\mathbf{(H_\tau^n)}$ is invariant in the sense of
Definition~\ref{def inv_PH_n}, then the quantity
\begin{multline*}
\sum_{k=1}^{n}\phi_k(t)\cdot X_{k-1}[x;z]^n_\tau(t)+\psi_z(t) Z[x;z]^n_\tau(t)\\
-\left[	\sum_{k=1}^{n}\phi_k(t)\cdot x^{(k)}(t)
+\psi_z(t)L[x;z]^n_\tau(t)\right]T[x;z]^n_\tau(t)
\end{multline*}
is constant in $t$ along all extremals of problem $\mathbf{(H_\tau^n)}$,
where the generators of the one-parameter family of maps are given by
\begin{equation*}
\begin{gathered}
T=\frac{\partial \mathcal{T}^s}{\partial s}\biggm\vert_{s=0},
\quad X_0=\frac{\partial \mathcal{X}^s}{\partial s}\biggm\vert_{s=0},
\quad Z=\frac{\partial \mathcal{Z}^s}{\partial s}\biggm\vert_{s=0},\\
X_k=\frac{d }{dt}X_{k-1}-x^{(k)}\frac{d}{dt}\left(
\frac{\partial\mathcal{T}^s}{\partial s}
\bigg\vert_{s=0}\right),
\quad k=1,\ldots, n-1,
\end{gathered}
\end{equation*}
and $\psi_z$, $\phi_k$ are defined by \eqref{psi}--\eqref{phi}.
\end{theorem}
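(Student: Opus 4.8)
The plan is to follow the same reduction strategy already used to prove Theorem~\ref{thm:E-L} and Theorem~\ref{thm:DB-R}: rewrite problem $\mathbf{(H_\tau^n)}$ as the non-delayed optimal control problem \eqref{Problem OC form}, apply the optimal control Noether theorem (Theorem~\ref{thm opt cont Noether}) to it, and then invert the change of variables \eqref{reduction}. First I would transport the invariance hypothesis. The family $h^s$ of Definition~\ref{def inv_PH_n} acts on $(t,x,z)$, and under the substitution \eqref{reduction} it induces a one-parameter family of maps on the enlarged state $\big(\{x^{k;i}\},\{z_j\}\big)$ and controls $u_i=x^{n;i}$ of the reduced problem. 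The crucial observation is that the component $x^{k;i}(t)=x^{(k)}(t+(i-1)\tau)$ transforms by the prolongation of $h^s$ to the $k$-th jet, so its infinitesimal generator is exactly $X_k$ evaluated at the shifted time $t+(i-1)\tau$; indeed, the recursion $X_k=\frac{d}{dt}X_{k-1}-x^{(k)}\frac{d}{dt}T$ is precisely the infinitesimal form of the total-derivative quotients $\frac{d^k\mathcal{X}^s}{d(\mathcal{T}^s)^k}$ appearing in Definition~\ref{def inv_PH_n}. Likewise $z_j$ has generator $Z$ at $t+(j-1)\tau$, while the control $u_i$ has generator $X_n$.

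With these identifications I would verify that the two requirements of Definition~\ref{def inv_PH_n} are exactly the two conditions of Definition~\ref{def inv OC} for the reduced problem: the second condition of Definition~\ref{def inv_PH_n} reproduces, subinterval by subinterval, the invariance $\frac{d\mathcal{Z}^s}{dt}=L(\cdots)\frac{d\mathcal{T}^s}{dt}$ of the $\dot z_j=L_j$ dynamics, the invariance of the trivial chain $\dot x^{k;i}=x^{k+1;i}$ being automatic from the prolongation recursion, so that together they give the state-equation condition of Definition~\ref{def inv OC}; and \eqref{eq inv PH_n_1} is the cost condition with running cost $f\equiv0$ and payoff density $\frac{z(b)}{b-a}$. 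Hence the reduced problem is invariant in the sense of Definition~\ref{def inv OC}, and Theorem~\ref{thm opt cont Noether} furnishes a quantity constant in $t$ along every Pontryagin extremal of \eqref{Problem OC form}.

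The last step is to reverse \eqref{reduction}. Using the costate identifications from the proof of Theorem~\ref{thm:E-L} (namely $\phi_{k;i}$ is the multiplier of $x^{k-1;i}$ and $\psi_j$ that of $z_j$) together with the definitions \eqref{psi}--\eqref{phi}, the Hamiltonian \eqref{hamilt} collapses to $\sum_{k=1}^{n}\phi_k(t)\cdot x^{(k)}(t)+\psi_z(t)L[x;z]^n_\tau(t)$, exactly as already exploited for Theorem~\ref{thm:DB-R}; the term $\psi\cdot X$ becomes $\sum_{k=1}^{n}\phi_k(t)\cdot X_{k-1}[x;z]^n_\tau(t)+\psi_z(t)Z[x;z]^n_\tau(t)$ once the shifted sums $\sum_i\phi_{k;i}X_{k-1}(\cdot+(i-1)\tau)$ and $\sum_j\psi_j Z(\cdot+(j-1)\tau)$ are glued back together; and the time generator becomes $T[x;z]^n_\tau(t)$. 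Assembling these pieces yields the asserted conserved quantity.

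I expect the main obstacle to be essentially bookkeeping, in two places. First, one must justify rigorously that the prolongation generators $X_k$ produced by the recursion coincide with the infinitesimal generators of the induced transformation of the reduced states $x^{k;i}$, that is, that the delayed, reparametrized jet structure of Definition~\ref{def inv_PH_n} lines up with the flat jet structure of \eqref{Problem OC form}, with correct treatment of the index shifts $t\mapsto t+(i-1)\tau$ and of the terminal subinterval where $L[x;z]^n_\tau(t+\tau)$ is, by definition, null. Second, one must control the auxiliary contributions $(b-t)\xi$ and the payoff-density term supplied by Theorem~\ref{thm opt cont Noether}; here I would differentiate \eqref{eq inv PH_n_1} at $s=0$ to obtain $\xi=-\frac{z(b)}{b-a}\frac{d}{dt}T$ and use this relation to show that these contributions combine with the $z_j$-costate terms of $\psi\cdot X$ so that, after inversion, only the stated expression survives up to an additive constant, which is irrelevant for a conservation law.
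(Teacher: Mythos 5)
Your proposal is correct and follows essentially the same route as the paper's own proof: reduce to the non-delayed optimal control problem \eqref{Problem OC form}, transfer the invariance of Definition~\ref{def inv_PH_n} to Definition~\ref{def inv OC} (with the rescaled constant $\xi_\tau = N\xi$), apply Theorem~\ref{thm opt cont Noether}, invert the change of variables \eqref{reduction}, and dispose of the residual terms $(\tau-t)\xi_\tau - \frac{z_N(\tau)}{\tau}T$ as an additive constant. Your way of obtaining the key relation (differentiating \eqref{eq inv PH_n_1} in $s$ at $s=0$ to get $\xi = -\frac{z(b)}{b-a}\frac{d}{dt}T$) is just the differential form of the paper's identity \eqref{eq:relation}, obtained there by integrating first; the two are equivalent.
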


\begin{proof}
We start by considering problem $\mathbf{(H_\tau^n)}$ in its non-delayed
optimal control form \eqref{Problem OC form}. The first step is to prove that
if problem $\mathbf{(H_\tau^n)}$ is invariant in the sense of
Definition~\ref{def inv_PH_n}, then \eqref{Problem OC form} is invariant in the sense
of Definition~\ref{def inv OC}. In order to do that, observe that
\eqref{eq inv PH_n_1} is equivalent to
\begin{equation*}
\left(\frac{z_N(\tau)}{N\tau}+\xi s
+ o(s)\right)\frac{d\mathcal{T}^s}{dt}[x;z]^n_\tau(t)
=\frac{z_N(\tau)}{N\tau}
\end{equation*}
and defining $\xi_\tau:=\xi N$ we have
\begin{equation}
\label{eq inv1 0-tau}
\left(\frac{z_N(\tau)}{\tau}+\xi_\tau s + o(s)\right)
\frac{d\mathcal{T}^s}{dt}[x;z]^n_\tau(t)
=\frac{z_N(\tau)}{\tau}, \quad \text{for some }\xi_\tau.
\end{equation}
Observe also that the control system of \eqref{Problem OC form} defines
$\mathcal{X}_k^s:=\frac{d\mathcal{X}_{k-1}^s}{d\mathcal{T}^s}$, that is,
\begin{equation*}
\frac{d\mathcal{X}_{k-1}^s}{dt}[x;z]^n_\tau(t)
=\mathcal{X}_k^s[x;z]^n_\tau(t)\frac{d\mathcal{T}^s}{dt}[x;z]^n_\tau(t),
\quad k=1, \ldots, n.
\end{equation*}
Let
\begin{equation*}
\begin{split}
\mathcal{X}_{k;i}[x;z]^n_\tau(t)&:=\mathcal{X}_k^s[x;z]^n_\tau(t+(i-1)\tau),\\
\mathcal{T}_{i}[x;z]^n_\tau(t)&:=\mathcal{T}^s[x;z]^n_\tau(t+(i-1)\tau),\\
\mathcal{Z}_j[x;z]^n_\tau(t)&:=\mathcal{Z}^s[x;z]^n_\tau(t+(j-1)\tau).
\end{split}
\end{equation*}
One has
\begin{equation}
\label{eq inv2 0-tau}
\frac{d\mathcal{X}_{k;i}}{dt}[x;z]^n_\tau(t)
=\mathcal{X}_{k+1;i}[x;z]^n_\tau(t)\frac{d\mathcal{T}_{i}}{dt}[x;z]^n_\tau(t)
\end{equation}
and
\begin{multline}
\label{eq inv3 0-tau}
\frac{d \mathcal{Z}_j}{dt}[x;z]^n_\tau(t)
= L_j\left[\mathcal{X}^s[x;z]^n_\tau(t);\mathcal{Z}^s[x;z]^n_\tau(t)
\right]_\tau^n(\mathcal{T}_j^s[x;z]^n_\tau(t))
\frac{d\mathcal{T}_j}{dt}[x;z]^n_\tau(t),
\end{multline}
$k=0,\dots, n-1$, $i=0,\dots N$, $j=1,\dots,N$.
Equalities \eqref{eq inv1 0-tau}--\eqref{eq inv3 0-tau}
prove that problem \eqref{Problem OC form} is invariant
in the sense of Definition \ref{def inv OC}.
This put us in conditions to advance to the second step: to apply 	
Theorem~\ref{thm opt cont Noether} to the non-delayed optimal
control problem \eqref{Problem OC form}. This theorem guarantees
that the quantity
\begin{multline*}
(\tau-t)\xi_\tau+\sum_{k=1}^{n}\sum_{i=0}^{N}\phi_{k;i}(t)
\cdot X_{k-1;i}[x;z]^n_\tau(t)+\sum_{j=1}^{N}\psi_j(t)Z_j[x;z]^n_\tau(t)\\
-\left[	\sum_{k=1}^{n}\sum_{i=0}^{N}\phi_{k;i}(t)\cdot x^{k;i}(t)
+\sum_{j=1}^{N} \psi_j(t) L_j[x;z]^n_\tau(t)	
+\frac{z_N(\tau)}{\tau}\right]T[x;z]^n_\tau(t)
\end{multline*}
is constant in $t$ along the extremals of \eqref{Problem OC form}, where
$X_{k;i}=\frac{\partial}{\partial s} \frac{d^{k}
\mathcal{X}_{k;i}^s}{d(\mathcal{T}^s)^{k}}\Big\vert_{s=0}$
and $Z_{i}=\frac{\partial}{\partial s}
\frac{d \mathcal{Z}_{i}^s}{d(\mathcal{T}^s)}\Big\vert_{s=0}$.
Rewriting in the original variables, we obtain
\begin{multline*}
(\tau-t)\xi_\tau+\sum_{k=1}^{n}\phi_{k}(t)\cdot X_{k-1}[x;z]^n_\tau(t)
+\psi_z(t) Z[x;z]^n_\tau(t)\\
-\left[	\sum_{k=1}^{n}\phi_{k}(t)\cdot x^{(k)}(t)+\psi_z(t)
L[x;z]^n_\tau(t)+\frac{z_N(\tau)}{\tau}\right]T[x;z]^n_\tau(t)
\end{multline*}
constant in $t$ along the extremals of \eqref{Problem OC form}.
The third step is to prove that
\begin{equation}
\label{eq:constant}
(\tau-t)\xi_\tau-\frac{z_N(\tau)}{\tau}T[x;z]^n_\tau(t)
\end{equation}
is constant in $t$. From the invariance condition
\eqref{eq inv1 0-tau}, we know that
\begin{equation*}
\left(\frac{z_N(\tau)}{\tau}+\xi_\tau s
+ o(s)\right)\frac{d\mathcal{T}^s}{dt}[x;z]^n_\tau(t)
=\frac{z_N(\tau)}{\tau}.
\end{equation*}
Integrating from $0$ to $t$ we conclude that
\begin{multline*}
\left(\frac{z_N(\tau)}{\tau}+\xi_\tau s
+ o(s)\right)\mathcal{T}^s[x;z]_\tau^n(t)\\
=\frac{z_N(\tau)}{\tau}t+\left(\frac{z_N(\tau)}{\tau}
+\xi_\tau s + o(s)\right)\mathcal{T}^s[x;z]_\tau^n(0).
\end{multline*}
Differentiating this equality with respect to $s$,
and then putting $s=0$, we get
\begin{equation}
\label{eq:relation}
\xi_\tau t+\frac{z_N(\tau)}{\tau}T[x;z]_\tau^n(t)
=\frac{z_N(\tau)}{\tau}T[x;z]_\tau^n(0).
\end{equation}
We conclude from \eqref{eq:relation} that expression
\eqref{eq:constant} is the constant
$$
\tau\xi_\tau - \frac{z_N(\tau)}{\tau}T[x;z]_\tau^n(0).
$$
Hence,
\begin{multline*}
\sum_{k=1}^{n}\phi_{k}(t)\cdot X_{k-1}[x;z]^n_\tau(t)+\psi_z(t) Z[x;z]^n_\tau(t)\\
-\left[	\sum_{k=1}^{n}\phi_{k}(t)\cdot x^{(k)}(t)+\psi_z(t)
L[x;z]^n_\tau(t)\right]T[x;z]^n_\tau(t)
\end{multline*}
is constant in $t$ along the extremals of problem \eqref{Problem OC form}.
Finally, observe that $X_0=\frac{\partial \mathcal{X}^s}{\partial s}\big\vert_{s=0}$
and
\begin{align*}
X_k&=\frac{\partial}{\partial s}
\frac{d^{k}\mathcal{X}^s}{d(\mathcal{T}^s)^{k}}\bigg\vert_{s=0}
=\frac{\partial}{\partial s} \left( \frac{\frac{d}{dt}\left(
\frac{d^{k-1}\mathcal{X}^s}{d(\mathcal{T}^s)^{k-1}}\right)}{
\frac{d\mathcal{T}^s}{dt}}\right)\Bigg\vert_{s=0}\\
&= \frac{d}{dt}\left(\frac{\partial}{\partial s}
\frac{d^{k-1}\mathcal{X}^s}{d(\mathcal{T}^s)^{k-1}}\bigg\vert_{s=0}\right)-x^{(k)}
\frac{d}{dt}\left(\frac{\partial\mathcal{T}^s}{\partial s} \bigg\vert_{s=0}\right)\\
&=\frac{d }{dt}X_{k-1}-x^{(k)}\frac{d}{dt}\left(
\frac{\partial\mathcal{T}^s}{\partial s} \bigg\vert_{s=0}\right),
\end{align*}
$k=1, \dots, n-1$. This concludes the proof.
\end{proof}

\begin{corollary}[cf. \cite{MyArt04}]
\label{cor:NT tau=0}
If the higher-order problem of Herglotz \eqref{eq:prb:cor:E-L tau=0}
is invariant in the sense of Definition~\ref{def inv_PH_n} (in $[a,b]$),
then the quantity
\begin{multline*}
\sum_{k=1}^{n}\tilde{\phi}_k(t)\cdot X_{k-1}[x;z]^n_0(t)+\psi_z(t) Z[x;z]^n_0(t)\\
-\left[	\sum_{k=1}^{n}\tilde{\phi}_k(t)\cdot x^{(k)}(t)
+\psi_z(t) L[x;z]^n_0(t)\right]T[x;z]^n_0(t)
\end{multline*}
is constant in $t$ along any extremal of the problem, where
\begin{equation*}
\tilde{\phi}_{k}(t)=\sum_{l=0}^{n-k}(-1)^{l+1}
\frac{d^l}{dt^l}\left(\psi_z(t)
\frac{\partial L}{\partial x^{(l+k)}}[x;z]^n_0(t)\right),
\end{equation*}
$k=1, \dots, n$, and $\psi_z$ is given by  \eqref{psi}.
\end{corollary}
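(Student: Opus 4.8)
The plan is to obtain this corollary as the non-delayed specialization $\tau = 0$ of Theorem~\ref{thm:Noether}, in the same manner that Corollary~\ref{cor:E-L tau=0} was deduced from Theorem~\ref{thm:E-L}. First I would set $\tau = 0$ throughout the hypothesis and the conclusion of Theorem~\ref{thm:Noether}. With $\tau = 0$ the retarded arguments collapse onto the present ones, so that $x_\tau^{(k)}(t) = x^{(k)}(t)$ and, in particular, the weight $\psi_z(t+\tau)$ appearing in \eqref{phi} becomes $\psi_z(t)$. The subinterval $[b-\tau,b]$ shrinks to the single point $\{b\}$, so the two-piece description of the multiplier $\phi_k$ in \eqref{phi} (and of the Euler--Lagrange equations \eqref{EL1}--\eqref{EL2}) reduces to one expression valid on all of $[a,b]$, consistently with Corollary~\ref{cor:E-L tau=0}.

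The decisive observation is that the Lagrangian of problem~\eqref{eq:prb:cor:E-L tau=0} depends only on $(t,x,\dot x,\ldots,x^{(n)},z)$ and carries no retarded slot whatsoever; hence $\frac{\partial L}{\partial x_\tau^{(l+k)}} \equiv 0$. Feeding this into \eqref{phi} makes its entire second summand vanish, so that $\phi_k$ collapses to $\tilde\phi_k(t) = \sum_{l=0}^{n-k}(-1)^{l+1}\frac{d^l}{dt^l}\bigl(\psi_z(t)\frac{\partial L}{\partial x^{(l+k)}}[x;z]^n_0(t)\bigr)$, which is precisely the multiplier named in the statement. Substituting $\tau = 0$, the identification $\phi_k = \tilde\phi_k$, and the coincidence of retarded and present derivatives into the conserved quantity of Theorem~\ref{thm:Noether}, and replacing every $[x;z]^n_\tau$ by $[x;z]^n_0$, turns that quantity into exactly the expression asserted to be constant along the extremals.

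Finally I would confirm that the hypotheses align: reading Definition~\ref{def inv_PH_n} with $\tau = 0$, so that the maps $h^s$ act on $[a,b]\times\mathbb{R}^m\times\mathbb{R}$ and the retarded slots of $L$ are absent, recovers precisely the notion of invariance of the non-delayed higher-order Herglotz problem assumed in the corollary; Theorem~\ref{thm:Noether} then applies directly. The only point demanding care, and the main obstacle, is the collapse $\phi_k \to \tilde\phi_k$: one must justify that discarding the retarded partial derivatives is legitimate, which rests on the Lagrangian of \eqref{eq:prb:cor:E-L tau=0} having genuinely no dependence on the delayed variables, rather than merely on the delay parameter vanishing. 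Once that is settled, the remainder is routine bookkeeping of the $\tau=0$ substitution.
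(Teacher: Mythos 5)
Your proposal is correct and follows exactly the paper's approach: the authors prove this corollary in one line by applying Theorem~\ref{thm:Noether} with $\tau=0$. Your additional bookkeeping---noting that the absence of retarded slots in the Lagrangian of \eqref{eq:prb:cor:E-L tau=0} kills the second summand in \eqref{phi}, so that $\phi_k$ collapses to $\tilde{\phi}_k$---is precisely the (implicit) content of that specialization.
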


\begin{proof}
Consider Theorem~\ref{thm:Noether} with $\tau=0$.
\end{proof}

Theorem~\ref{thm:Noether} is a generalization of Noether's theorem \cite{MyArt02}
for the first-order problem of Herglotz with time delay.
Besides the improvement of dealing with piecewise
functions instead of continuous, the theorem presents
a similar conserved quantity but without the imposition of two additional
hypotheses required in \cite{MyArt02}. Moreover, the current definition
of invariance is more general than the one considered in \cite{MyArt02}.

\begin{corollary}[cf. \cite{MyArt02}]
\label{cor:NT n=1}
If the first-order problem of Herglotz with time delay
\eqref{eq:cor:E-L n=1} is invariant in the sense
of Definition~\ref{def inv_PH_n}, then the quantity
\begin{multline*}
\left(\psi_z(t)\frac{\partial L}{\partial \dot{x}}[x;z]^1_\tau(t)
+\psi_z(t+\tau)\frac{\partial L}{\partial \dot{x}_\tau}[x;z]^1_\tau(t+\tau)\right)
X_0[x;z]^1_\tau(t)\\
+\psi_z(t) Z[x;z]^1_\tau(t)+\left[
-\left(\psi_z(t)\frac{\partial L}{\partial
\dot{x}}[x;z]^1_\tau(t)\right.\right.\\
\left.\left.+\psi_z(t+\tau)\frac{\partial L}{\partial \dot{x}_\tau}[x;z]^1_\tau(t
+\tau)\right) \dot{x}(t)+\psi_z(t) L[x;z]^1_\tau(t)	\right]T[x;z]^1_\tau(t)
\end{multline*}
is constant in $t \in [a,b]$ along any extremal of the problem.
\end{corollary}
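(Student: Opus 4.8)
The plan is to obtain this corollary as the $n=1$ specialization of Theorem~\ref{thm:Noether}, so that essentially no fresh argument is needed beyond making the generators and multipliers explicit for a single derivative. First I would note that the invariance hypothesis here is exactly Definition~\ref{def inv_PH_n} read with $n=1$, which is precisely the assumption under which Theorem~\ref{thm:Noether} delivers a conserved quantity; hence that quantity is already available and the whole task is to rewrite it in the first-order notation used in the statement.

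Next I would collapse the conserved quantity of Theorem~\ref{thm:Noether} to $n=1$. The outer sum $\sum_{k=1}^{n}$ then contains only the index $k=1$, and the higher generators $X_k$ for $k=1,\dots,n-1$ do not appear at all, since that index range is empty when $n-1=0$; thus only $T$, $X_0=\frac{\partial \mathcal{X}^s}{\partial s}\big\vert_{s=0}$ and $Z=\frac{\partial \mathcal{Z}^s}{\partial s}\big\vert_{s=0}$ survive. The conserved quantity therefore reduces to $\phi_1(t)\cdot X_0[x;z]^1_\tau(t)+\psi_z(t)\,Z[x;z]^1_\tau(t)-\bigl[\phi_1(t)\cdot\dot{x}(t)+\psi_z(t)\,L[x;z]^1_\tau(t)\bigr]\,T[x;z]^1_\tau(t)$, with $\psi_z$ given by \eqref{psi}.

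The one genuine computation is to make $\phi_1$ explicit from \eqref{phi} with $n=1$ and $k=1$. Here the inner sum $\sum_{l=0}^{n-k}$ contracts to its single $l=0$ term, so that $\phi_1(t)=-\bigl(\psi_z(t)\frac{\partial L}{\partial\dot{x}}[x;z]^1_\tau(t)+\psi_z(t+\tau)\frac{\partial L}{\partial\dot{x}_\tau}[x;z]^1_\tau(t+\tau)\bigr)$. Substituting this expression into the $X_0$-term and into the bracketed coefficient of $T$, and then regrouping, produces the quantity displayed in the statement (up to the harmless freedom of an overall sign, under which a conserved quantity remains conserved). I expect the only place requiring care to be the sign bookkeeping forced by the factor $(-1)^{l+1}$ in \eqref{phi}, which gives $\phi_1$ an overall minus sign that must be propagated consistently through both of its occurrences; this is the sole realistic source of error. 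Everything else is a direct substitution, so I anticipate no substantive obstacle.
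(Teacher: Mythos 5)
Your strategy coincides with the paper's own proof, which is literally the one-liner ``Consider Theorem~\ref{thm:Noether} with $n=1$'', and your evaluation of \eqref{phi} at $n=k=1$ is correct: writing
\begin{equation*}
A(t):=\psi_z(t)\frac{\partial L}{\partial \dot{x}}[x;z]^1_\tau(t)
+\psi_z(t+\tau)\frac{\partial L}{\partial \dot{x}_\tau}[x;z]^1_\tau(t+\tau),
\end{equation*}
one has $\phi_1(t)=-A(t)$.

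The gap is in your last step. Substituting $\phi_1=-A$ into the conserved quantity of Theorem~\ref{thm:Noether} yields
\begin{equation*}
Q(t):=-A(t)\cdot X_0[x;z]^1_\tau(t)+\psi_z(t)Z[x;z]^1_\tau(t)
+\bigl(A(t)\cdot\dot{x}(t)-\psi_z(t)L[x;z]^1_\tau(t)\bigr)T[x;z]^1_\tau(t),
\end{equation*}
while the corollary asserts constancy of
\begin{equation*}
\widetilde{Q}(t):=A(t)\cdot X_0[x;z]^1_\tau(t)+\psi_z(t)Z[x;z]^1_\tau(t)
+\bigl(\psi_z(t)L[x;z]^1_\tau(t)-A(t)\cdot\dot{x}(t)\bigr)T[x;z]^1_\tau(t).
\end{equation*}
These are \emph{not} equal up to an overall sign: negating $Q$ also flips the sign of the $\psi_z Z$ term, so in fact $\widetilde{Q}=-Q+2\psi_z Z[x;z]^1_\tau$, and $\psi_z Z$ is not constant in general. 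Therefore the parenthetical ``harmless overall sign'' does not close the argument: from the constancy of $Q$ you cannot conclude the constancy of $\widetilde{Q}$ without the additional, unproven (and generally false) claim that $\psi_z Z$ is constant along extremals. What your sign bookkeeping actually uncovers is a sign inconsistency in the paper itself: the printed Corollary~\ref{cor:NT n=1} does not agree with the $n=1$ specialization of Theorem~\ref{thm:Noether} and \eqref{phi}, whereas Corollary~\ref{cor:NT tau=0} does keep the theorem-consistent signs through $\tilde{\phi}_k$. A sound blind write-up should therefore end with $Q$ as the conserved quantity --- equivalently, the printed expression with the sign of the $\psi_z Z$ term (or of all the remaining terms) reversed --- and explicitly flag the mismatch with the stated quantity, rather than absorb it into an overall sign.
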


\begin{proof}
Consider Theorem~\ref{thm:Noether} with $n=1$.
\end{proof}

\begin{remark}
If $t\in[b-\tau, b]$, then $L[x;z]_\tau^n(t+\tau)$
is, by definition, null (see \eqref{reduction})
and the constant of Corollary~\ref{cor:NT n=1} reduces to
\begin{multline*}
\left(\psi_z(t)\frac{\partial L}{\partial
\dot{x}}[x;z]^1_\tau(t)\right) X_0[x;z]^1_\tau(t)
+\psi_z(t) Z[x;z]^1_\tau(t)\\
+\left[	-\left(\psi_z(t)\frac{\partial L}{\partial \dot{x}}[x;z]^1_\tau(t)\right)
\dot{x}(t)+\psi_z(t) L[x;z]^1_\tau(t)	\right]T[x;z]^1_\tau(t)
\end{multline*}
for $t \in[b-\tau,b]$, which is the second constant quantity of \cite{MyArt02}.
\end{remark}


\section{Conclusion}
\label{sec:conc}

Optimal control is a convenient tool to deal
with delayed and non-delayed Herglotz type
variational problems. In this work we have shown
how some of the central results from the classical
calculus of variations can be proved for higher-order
Herglotz variational problems with time delay
from analogous and well-known optimal control results.
The techniques here developed can now be used to obtain
other results. For example, our optimal control approach
can be employed together with \cite{MR1980565}
to derive an extension of the second Noether theorem
to the delayed or non-delayed Herglotz framework. This
is under investigation and will be addressed elsewhere.


\section*{Acknowledgements}

This research is part of first author's Ph.D. project,
which is carried out at University of Aveiro.
It was partially supported by Portuguese funds through
the Center for Research and Development in Mathematics
and Applications (CIDMA) and the Portuguese Foundation
for Science and Technology (FCT),
within project UID/MAT/04106/2013.
The authors are grateful to an anonymous referee 
for several comments and suggestions, which helped
to improve the quality of the paper.



\end{document}